\RequirePackage{fix-cm}

\documentclass[smallextended,final]{svjour3} 
\smartqed 

\makeatletter
\def\cl@chapter{}
\makeatother

\journalname{Computational Optimization and Applications}
\spnewtheorem{fact}{Fact}{\bfseries}{\it}
\usepackage[misc]{ifsym}



\usepackage{enumitem}
\usepackage{mathtools}
\usepackage{hyperref}
\usepackage{cleveref}
\usepackage{autonum}
\usepackage{cite}
\usepackage{color}
\usepackage{float}
\usepackage{subcaption}
\usepackage{graphicx}
\definecolor{labelkey}{rgb}{0,0.08,0.45}
\definecolor{refkey}{rgb}{0,0.6,0.0}

\usepackage{url}
\usepackage[output-decimal-marker={.}]{siunitx}
\usepackage{tabularx,booktabs}
\usepackage[left,mathlines]{lineno}

\usepackage{amssymb}
\usepackage{fancyhdr}

\usepackage[margin=1.0in]{geometry}

\parindent  4mm
\parskip    5pt 
\tolerance  3000



\newcommand{\menge}[2]{\big\{{#1}~\big |~{#2}\big\}}

\newcommand{\fenv}[1]%
{\ensuremath{\,\overrightarrow{\operatorname{env}}_{#1}}}
\newcommand{\benv}[1]%
{\ensuremath{\,\overleftarrow{\operatorname{env}}_{#1}}}

\newcommand{\scal}[2]{\left\langle{#1},{#2}  \right\rangle}

\newcommand{\RR}{\ensuremath{\mathbb R}}

\newcommand{\NN}{\ensuremath{\mathbb N}}

\newcommand{\lspan}{\ensuremath{\operatorname{span}}}
\newcommand{\aff}{\ensuremath{\operatorname{aff}}}

\newcommand{\Id}{\ensuremath{\operatorname{Id}}}

\newcommand{\tol}{\ensuremath{\mathtt{tol}}}


\DeclareMathOperator{\dist}{dist}



%
{\begin{list}{}{%
\settowidth{\labelwidth}{\textrm{#1~}}%
\setlength{\leftmargin}{\labelwidth+\labelsep}}}
{\end{list}}


\newcounter{count}

\allowdisplaybreaks

\begin{document}

\title{The Block-wise Circumcentered--Reflection Method\thanks{RB was partially supported by  Brazilian Agency  \emph{Conselho Nacional de Pesquisa} (CNPq), Grants 304392/2018-9 and 429915/2018-7; JYBC was partially supported by the \emph{National Science Foundation} (NSF), Grant DMS -- 1816449.}}
\author{Roger Behling\and 
J.-Yunier Bello-Cruz \and Luiz-Rafael Santos}


\institute{
  Roger Behling \Letter   \at School of Applied Mathematics, Funda\c{c}\~ao Get\'ulio Vargas \\ 
Rio de Janeiro, RJ -- 22250-900, Brazil. \email{rogerbehling@gmail.com}
    \and 
J.-Yunier Bello-Cruz \at Department of Mathematical Sciences, Northern Illinois University. \\  DeKalb, IL -- 60115-2828, USA. \email{yunierbello@niu.edu}
\and 
 Luiz-Rafael Santos \at Department of Mathematics, Federal University of Santa Catarina. \\ 
Blumenau, SC -- 88040-900, Brazil. \email{l.r.santos@ufsc.br}
}

\maketitle

\begin{abstract} 
The elementary Euclidean concept of circumcenter has recently been employed to improve two aspects of the classical Douglas--Rachford method for projecting onto the intersection of affine subspaces. The so-called circumcentered--reflection method is able to both accelerate the average reflection scheme by the Douglas--Rachford method and cope with the intersection of more than two affine subspaces. We now introduce the technique of circumcentering in blocks, which, more than just an option over the basic algorithm of circumcenters, turns out to be an elegant manner of generalizing the method of alternating projections. Linear convergence for this novel block-wise circumcenter framework is derived and illustrated numerically. Furthermore, we prove that the original circumcentered--reflection method essentially finds the best approximation solution in one single step if the given affine subspaces are hyperplanes.

\keywords{Accelerating convergence \and Best approximation problem \and  Circumcenter scheme \and Douglas--Rachford method \and  Linear and finite convergence \and Method of alternating projections.}

\subclass{49M27 \and 65K05 \and 65B99 \and 90C25}

\end{abstract}

\section{Introduction}\label{sec:intro}

We consider the important feasibility problem of projecting onto the intersection of affine subspaces, frequently also referred to as \emph{best approximation problem}. Let $\{U_i\}_{i\in \mathbf{I}}$ be a family of finitely many affine subspaces in $\RR^n$ with $\mathbf{I}  \coloneqq \{1,2,\ldots,m\}$ and $m$ fixed (we require no relation between $n$ and $m$). The intersection of the family is denoted by $S\coloneqq \bigcap_{i\in \mathbf{I}} U_i$ (which we assume nonempty) and the problem we are interested in consists of projecting a given point $z\in\RR^n$ onto $S$. Equivalently,
\begin{equation}\label{BestProblem}
\min_{s\in S}\|z-s\|.
\end{equation}
Here and throughout the paper,  $\scal{\cdot}{\cdot}$ stands for the Euclidean inner product and  $\|\cdot\|$ is  the induced norm. The best approximation problem \eqref{BestProblem} has the unique solution $P_S(z)$, where $P_S$ denotes the Euclidean projection onto $S$. Problem \eqref{BestProblem} can, of course, be rewritten as a convex quadratic program with objective function $\frac{1}{2}\|z-s\|^2$ and equality constraints, as each $U_i$ is an affine subspace. Note that $S$ itself is an affine subspace. Note also that the classical problem of finding the least-norm solution of a system of linear equations is a particular case of \eqref{BestProblem}.

Reflection and projection type methods are celebrated tools for solving a variety of feasibility problems, including \eqref{BestProblem}, and they remain trendy due to their balance between good performance and simplicity (see, \emph{e.g.}, \cite{Bauschke:2006ej}). 
Probably the two most famous and standard among these methods are the Douglas--Rachford method (DRM), or averaged alternating reflection method (see, \emph{e.g.}, \cite{BCNPW14}); and the method of alternating projections (MAP) which is also known as von Neumann's {or Kaczmarz' algorithm  (see, \emph{e.g.}, \cite{Bauschke:1993dd,Strohmer:2008cm})}. Upon our ideas presented in \cite{Behling:2017da,Behling:2017bz}, we devote this work to study a circumcenter type method related to both DRM and MAP.  

Suitable DRM and MAP schemes determine the solution of the best approximation problem \eqref{BestProblem}. DRM and MAP only use knowledge provided by projections onto individual sets, which often leads to a desirable low computational cost per iteration. Nonetheless, slow convergence due to zig-zag or spiral behavior are usually inherent to these classical methods (see, \emph{e.g.}, \cite{BCNPW14,BCNPW15, Bauschke:2003gb}). In order to minimize spiralness of Douglas--Rachford sequences to a certain extent, we have introduced the circumcentered--reflection method (CRM). This was firstly done in \cite{Behling:2017da} for problem \eqref{BestProblem} with two sets, that is, $m=2$. In this case, if we have a current iterate say $z\in\RR^n$, DRM moves us to $z_{DRM}\coloneqq\frac{1}{2}(\Id+R_{U_2}R_{U_1})(z)$, whereas MAP provides $z_{MAP}\coloneqq P_{U_2}P_{U_1}(z)$. The symbol $\Id$ denotes the identity operator and $R_{U_i}\coloneqq2P_{U_i}-\Id$ is the reflection operator onto $U_i$. We proposed the iteration $z_{CRM}\coloneqq \textrm{circumcenter}\{z,R_{U_1}(z),R_{U_2}R_{U_1}(z)\}$, where $z_{CRM}$ fulfills two properties: \textbf{(i)} it lies on the affine subspace defined by $z$, $R_{U_1}(z)$ and $R_{U_2}R_{U_1}(z)$, which we denote by $\aff\{z,R_{U_1}(z),R_{U_2}R_{U_1}(z)\}$ and, \textbf{(ii)} $z_{CRM}$ is equidistant to $z$, $R_{U_1}(z)$ and $R_{U_2}R_{U_1}(z)$, therefore the use of the term \emph{circumcenter}. 

The resulting algorithm significantly outperforms DRM and MAP numerically as presented in \cite{Behling:2017da}. This numerical performance of CRM, together with the deficiency of DRM in dealing with more than two sets (see   \cite[Example $2.1$]{Artacho:2014uo} and some modifications \cite{Borwein:2014ka,Borwein:2015vm} for DRM), motivated our theoretical study in \cite{Behling:2017bz}. The circumcenter schemes we came up with are already in the attention of specialists of the field (see Arag{\'o}n Artacho \emph{et al} \cite{AragonArtacho:2019ug}, Bauschke \emph{et al} \cite{Bauschke:2018ut,Bauschke:2018wa,Bauschke:2019uh}, 
Lindstrom and Sims \cite{Lindstrom:2018uc} and Ouyang \cite{Ouyang:2018gu}) and questions on the possibility of successful behavior in more general and more important settings are arising. It is worth emphasizing that DRM handles satisfactorily some highly relevant kinds of problems related to nonconvex and inconsistent feasibility problems involving (affine) subspaces (see, for instance,  \cite{Bauschke:2016jw,Demanet:2016fj,Hesse:2013cv,Hesse:2014gi,Bauschke:2013jb}). This suggests a promising behavior of circumcenter-type methods for these kinds of problems since CRM may be seen as a geometrical improvement of DRM.

The linear convergence of the circumcentered--reflection method (CRM) was established in \cite{Behling:2017bz} for solving problem \eqref{BestProblem} with $m\geq 2$ affine subspaces. Since the computation of a circumcenter requires the resolution of a suitable $m\times m$ linear system, this might not be of negligible computational cost for large $m$. To avoid this drawback for problems where the computation of $z_{CRM}$  is simply too demanding,  we propose in the present work the Block-wise Circumcentered--Reflection Method (Bw-CRM) by using an arbitrary ordered partition of the indices $\{1,2,\ldots,m\}$, which contains the original CRM described above as a particular realization. Moreover, two elegant connections of this scheme with MAP follow. These nice interpretations further indicate a possible potential of the proposed method for solving problems more general than \eqref{BestProblem} (even nonconvex), where some affine structure remains, though.

The presentation of this paper is as follows. Definitions, basic facts and important auxiliary results are presented in Section \ref{sec:preliminary}. Still in Section \ref{sec:preliminary}, we introduce the notion of \emph{best approximation mapping} along with properties of these mappings, which are key to our work. In Section \ref{sec:bwCRM}, we formally introduce Bw-CRM. The global Q-Linear convergence of Bw-CRM for problem \eqref{BestProblem} is proven in Section \ref{sec:bwCRM-linearconv}. Connections between Bw-CRM and MAP are briefly discussed in Section \ref{sec:bwCRM-MAP}. In Section \ref{sec:onestepCRM-hyperplane} we prove the curious CRM feature of solving problem \eqref{BestProblem} in only one step when the correspondent affine subspaces are hyperplanes. Numerical illustrations are presented in Section \ref{sec:numerical-llustrations}. In Section \ref{sec:conlcuding} we provide a summary of our work and new ideas for future investigation.

\section{Preliminary and auxiliary results}\label{sec:preliminary}
Let us review the definition of Friedrichs angle and provide key results needed in sequel.

\begin{definition}[Friedrichs angle]
The \emph{cosine of the Friedrichs angle}
between   affine subspaces $V$ and $W$ with nonempty intersection is given by
\[
\begin{aligned}
c_F &\coloneqq \sup \menge{\scal{v}{w}}{v\in \hat V\cap (\hat V\cap
\hat W)^\perp,\; w\in \hat W\cap (\hat V\cap \hat W)^\perp,\;
\|v\|\leq 1,\; \|w\|\leq 1}.
\end{aligned}
\]
Here, $\hat V$ and $\hat W $ are subspaces given by  $V - \hat z$ and $W - \hat z$, respectively, where  $\hat z\in V\cap W$ is arbitrary but fixed, and  the $\perp$ operation provides the correspondent orthogonal subspace.
\end{definition}

In the above definition, it is easy to check that $c_F$ does not depend on the choice of $\hat z$. Moreover, it is well known that  $0\le c_F<1$, for any two intersecting affine subspaces. See fundamental properties of the Friedrichs angle in \cite[Theorem~13]{Deutsch:1995ja} and \cite[Lemma~9.5]{Deutsch:2001fl}, for instance.

For clearer presentation of our results, we introduce the concept of \emph{best approximation mapping} (BAM).

\begin{definition}[best approximation mapping]\label{def:approxproj} Let $V\neq \varnothing$ be a given  affine subspace in $\RR^n$. We say that  $G_V:\RR^n\to\RR^n$ is a \emph{best approximation mapping} with respect to $V$ (for short $V$-BAM) if 
\item[   (i)] $P_V(G_V(z)) = P_V(z)$, for all $z \in\RR^n$; and 
\item[   (ii)]  there exists a constant $r_V\in[0,1)$ such that $
\|G_V(z)-P_V(z)\|\leq r_V \|z-P_V(z)\|$,  for all $z \in\RR^n$.
\end{definition}
Note that the projection operator $P_V$  is a $V$-BAM. Indeed,  
if $G_V = P_V$, for any $z \in\RR^n$ and all $r_V\geq 0$,  we have $P_V(G_V(z)) = P_V(P_V(z)) = P_V(z)$ and   $\|G_V(z)-P_V(z)\| = \|P_V(z)-P_V(z)\| = 0 \leq r_V \|z-P_V(z)\|$. In general, it is easy to see that  $G_V =  (1-\alpha)\Id+\alpha P_V$ with $0<\alpha<2$ is a $V$-BAM  with $r_V=|1-\alpha|\in [0,1)$. Nonetheless,  Definition \ref{def:approxproj} allows for non-affine mappings. Later we will see and use the fact that the circumcenter operator defined in \cite{Behling:2017bz} is a \emph{best approximation mapping}, even though it is usually non-affine.

Simple manipulations provide an immediate consequence of \Cref{def:approxproj}.

\begin{proposition} \label{prop:BAMsequence} Let $G_V$ be a $V$-BAM with constant  $r_V\in[0,1)$.
For any $z\in \RR^n$ and $\ell\in\NN$,  $P_{V}(G_V^\ell(z))=P_{V}(z)$ and $(G_V^k(z))_{k\in\NN}$ converges to $P_{V}(z)$ with linear rate $r_{V}$. 
\end{proposition}

\begin{proof}
Let $z\in\RR^n$ and $\ell\in\NN$. Then, Definition \ref{def:approxproj}(i) implies
$$P_{V}(G_V^\ell(z))=P_{V}(G_V(G_V^{\ell-1}(z)))=P_{V}(G_V^{\ell-1}(z))=\cdots=P_{V}(G_V(z))=P_{V}(z).$$
Moreover,
\begin{align}
\|G^k_V(z)-P_V(z)\| &= \|G_V(G_V^{k-1}(z))-P_V(G^{k-1}_V(z))\| \\& \leq r_V \|G^{k-1}_V(z)-P_V(G^{k-1}_V(z))\|\le \cdots \le r_V^{k-1} \|G_V(z)-P_V(G_V(z))\|\\ 
&= r_V^{k-1} \|G_V(z)-P_V(z)\|\\&\le r_V^k \|z-P_V(z)\|,\end{align} proving the proposition.
 \qed
\end{proof}
The main purpose of this section is to study the composition of best approximation mappings. In order to do this, we state and prove an auxiliary result on adjacent angles. 

\begin{proposition}\label{prop:gamma} Let $u, v \in\RR^{n}$ be nonzero vectors forming an angle $\gamma\in[0,\frac{\pi}{2}]$. If a nonzero vector $w\in\RR^{n}$ forms an angle $\beta\in[0,\pi]$ with $v$ and the angle $\phi$ between $w$ and $u$ is such that $\phi\in[0,\frac{\beta}{2}]$, then $\gamma \geq \frac{\beta}{2}$.
\end{proposition}

\begin{proof}
Assume, without loss of generality, that $\|u\|=\|v\|=\|w\|=1$. Then, 
\[ \label{eq:cosines}
\cos\gamma = \scal{u}{v},\; \cos\beta = \scal{v}{w}\text{ and } \cos\phi = \scal{u}{w}. 
\]
Also, we have  $\cos\gamma\geq 0$ and $\cos\phi\geq 0$.

If $\beta=0$, $\gamma \geq \frac{\beta}{2}$, trivially. Moreover, $\beta = \pi$ if, and only if, $w = -v$. In this case,
\[
  0 \leq \cos\phi =  \scal{u}{w} =  \scal{u}{-v} =  - \scal{u}{v} = -\cos \gamma \leq 0.
\]
Thus, $\cos\gamma = 0$ and $\gamma  = \frac{\pi}{2} = \frac{\beta}{2}$.

For the rest of the proof, let  $\beta\in(0,\pi)$ and consider  the following convex optimization problem
\[\label{eq:convexoptaux}
\begin{aligned}
\min_{x} &&& -\scal{v}{x} \\
{\text{s.t. } } &&& \frac{1}{2}\|x\|^{2} - \frac{1}{2} \leq 0 \\
              &&& \cos \tfrac{\beta}{2} - \scal{w}{x} \leq 0.
\end{aligned}
\]
 
 By Weierstrass, this problem has a solution as the objective function is continuous  and the feasible set is compact. Note that $\frac{1}{2}\left(1+\cos\tfrac{\beta}{2}\right)w$ is a Slater point, that is, it  fulfills both constraints strictly because $\cos \tfrac{\beta}{2}<1 $ as $\beta\in(0,\pi)$. Therefore, $x$ is a solution of \eqref{eq:convexoptaux}  if, and only if, it satisfies  the KKT conditions
 \begin{subequations}\label{eq:kkt}
\begin{align}
-v + \mu_1 x - \mu_2 w = 0  \label{eq:kkt_auxlemma1} \\
  \|x\| - 1 \leq 0 \label{eq:kkt_auxlemma2} \\
\cos \tfrac{\beta}{2} - \scal{w}{x} \leq 0\label{eq:kkt_auxlemma3} \\
 \mu_1\geq 0, \; \mu_2 \geq 0\label{eq:kkt_auxlemma4} \\
\mu_1\left(\|x\| - 1\right) = 0 \label{eq:kkt_auxlemma5} \\
\mu_2\left(\cos \tfrac{\beta}{2} - \scal{w}{x}\right) = 0, \label{eq:kkt_auxlemma6} \\
 \end{align}
 \end{subequations}
where $\mu_{1}$ and $\mu_{2}$ are Lagrange multipliers.

We claim that $x^*\coloneqq \frac{v+w}{\|v+w\|}$, which is well defined since $\beta \neq \pi$, is a KKT point for  \eqref{eq:convexoptaux}. Condition \eqref{eq:kkt_auxlemma1} is satisfied with $\mu_1^*=\|v+w\|$ and $\mu_2^*=1$. These multipliers yield \eqref{eq:kkt_auxlemma4} strictly. Trivially,  since $\|x^*\|=1$, condition \eqref{eq:kkt_auxlemma2} holds sharply and \eqref{eq:kkt_auxlemma5} follows as well. Obviously,  from $\scal{v}{v}=\scal{w}{w}=1$, we have
\[\label{eq:xstarw-xstarv} \scal{w}{x^*} =
\scal{w}{\dfrac{v+w}{\|v+w\|}} = \dfrac{\scal{w}{v} + \scal{w}{w}}{\|v+w\|} = \dfrac{  \scal{v}{v}+\scal{v}{w}}{\|v+w\|}  =  \scal{v}{\dfrac{v+w}{\|v+w\|}} = \scal{v}{x^*}.\] 
Then,
\begin{align}
2\scal{w}{x^*} &  = 
\scal{v}{x^*}  + \scal{w}{x^*}    =  \scal{v}{\dfrac{v+w}{\|v+w\|}} + \scal{w}{\dfrac{v+w}{\|v+w\|}} \\
  &  =\dfrac{\scal{v+w}{v+w}}{\|v+w\|} = \dfrac{\|v+w\|^2}{\|v+w\|} = \|v+w\|\\
  & = \sqrt{\|v\|^2+\|w\|^2+2\scal{v}{w}} \\ 
  & = \sqrt{2+2\cos\beta} = 2\sqrt{\frac{1+\cos\beta}{2}} \\
  &= 2\cos\frac{\beta}{2}, \label{eq:wcosbeta2} \\ 
 \end{align}
that is, condition \eqref{eq:kkt_auxlemma3} holds sharply and yields  \eqref{eq:kkt_auxlemma6}. 

Note that  $u$ is a feasible point for \eqref{eq:convexoptaux}. In fact, by assumption $\|u\|=1$ and $0\leq \phi\leq \frac{\beta}{2}$, which means that  $\scal{w}{u} = \cos\phi \geq \cos\frac{\beta}{2}$.

Finally, using the definition of $\gamma$, the optimality of $x^*$, \eqref{eq:xstarw-xstarv} and \eqref{eq:wcosbeta2} we derive
\[
\cos\gamma = \scal{v}{u} \leq  \scal{v}{x^*} = \scal{w}{x^*} = \cos\tfrac{\beta}{2}.
\]
Hence, $\gamma\geq \frac{\beta}{2}$,  proving the lemma. \qed
\end{proof}

We now start to address  the composition of  {best approximation mappings}. The next result is the keystone of our analysis.

\begin{lemma}[composition of two best approximation mappings]\label{LemmaAux1} Let us consider two affine subspaces $V$ and $W$  of $\, \mathbb{R}^n$ with nonempty intersection $V\cap W$. Then, the composition of a $V$-BAM  and a $W$-BAM is a $(V\cap W)$-BAM.
\end{lemma}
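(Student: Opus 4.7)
Write $\pi := P_{V\cap W}$, $G := G_W\circ G_V$, and, for a fixed $z\in\RR^n$, set $q := \pi(z)\in V\cap W$. The plan is to verify the two items of Definition~\ref{approxproj} for $G$ and $V\cap W$.

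Item (i) is essentially formal. The inclusions $V\cap W\subseteq V, W$ together with \eqref{e:PP1} (applied after translating by any point of $V\cap W$) give $\pi\circ P_V = \pi\circ P_W = \pi$. Composing with Definition~\ref{approxproj}(i) applied to $G_V$ and $G_W$ yields $\pi\circ G_V = \pi\circ P_V\circ G_V = \pi\circ P_V = \pi$, and likewise $\pi\circ G_W = \pi$; chaining gives $\pi\circ G = \pi\circ G_V = \pi$. For the other equality, $q\in V$ forces $G_V(q) = G_V(P_V(q)) = P_V(q) = q$, and analogously $G_W(q) = q$, whence $G(\pi(z)) = G(q) = q = \pi(z)$.

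For item (ii), I apply Fact~\ref{Fato}(ii) (Pythagoras) on the affine subspace $V$ with $q\in V$ to obtain $\|G_V(z)-q\|^2 = \|G_V(z)-P_V(z)\|^2+\|P_V(z)-q\|^2\le \|z-q\|^2-(1-r_V^2)\|z-P_V(z)\|^2$, and the same argument on $W$ with $y:=G_V(z)$. Chaining the two inequalities yields
\[
\|G(z)-q\|^2 \leq \|z-q\|^2 - (1-r_V^2)\|z-P_V(z)\|^2 - (1-r_W^2)\|y-P_W(y)\|^2.
\]
Everything thus reduces to a single claim: there exists $\kappa\in(0,1]$, depending only on $r_V,r_W$ and the Friedrichs angle, such that
\[
(1-r_V^2)\|z-P_V(z)\|^2+(1-r_W^2)\|y-P_W(y)\|^2 \;\geq\; \kappa\,\|z-q\|^2,
\]
after which the desired constant is $r_{V\cap W}:=\sqrt{1-\kappa}\in[0,1)$.

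To prove this claim — the core of the argument — I translate so that $q=0$ and decompose $u:=z-P_V(z)\in V^\perp$, $v:=P_V(z)\in V\cap(V\cap W)^\perp$, so that $\|z-q\|^2=\|u\|^2+\|v\|^2$. Setting $e_V:=G_V(z)-P_V(z)\in V^\perp$ with $\|e_V\|\le r_V\|u\|$, the nonexpansiveness of $I-P_W$ gives $\|y-P_W(y)\|\ge \|v-P_W(v)\|-\|e_V\|$, while Fact~\ref{f:angle}(ii) applied to $v\in V\cap(V\cap W)^\perp$ and $P_W(v)\in W\cap(V\cap W)^\perp$ (via $\|P_W(v)\|^2=\langle v,P_W(v)\rangle\le c_F\|v\|\|P_W(v)\|$) yields $\|v-P_W(v)\|\ge\sqrt{1-c_F^2}\,\|v\|$. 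A case split on the sign of $\sqrt{1-c_F^2}\,\|v\|-r_V\|u\|$ closes the argument: when positive, squaring and expanding produces a quadratic form in $(\|u\|,\|v\|)$ whose determinant equals a positive multiple of $(1-r_V^2)(1-r_W^2)(1-c_F^2)$, hence is positive definite; when non-positive, $\|v\|^2\le\tfrac{r_V^2}{1-c_F^2}\|u\|^2$, and the $(1-r_V^2)\|u\|^2$ term alone dominates $\|z-q\|^2$. The main obstacle will be shepherding the perturbation $\|e_V\|$ through the Friedrichs-angle estimate: because $G_V$ is not the projection itself, all three strict inequalities $r_V<1$, $r_W<1$, and $c_F<1$ must be combined to extract a uniform $\kappa>0$.
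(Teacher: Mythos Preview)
Your argument is correct. Item~(i) and the chained Pythagoras estimate leading to
\[
\|G(z)-q\|^2 \le \|z-q\|^2 - (1-r_V^2)\|z-P_V(z)\|^2 - (1-r_W^2)\|G_V(z)-P_W(G_V(z))\|^2
\]
coincide with the paper (its \eqref{Pyth1}--\eqref{G_monotone}). The divergence is in how the uniform gap is extracted from the two negative terms. The paper introduces the auxiliary lines $Z=\aff\{\hat z,z\}$ and $G_V(Z)=\aff\{\hat z,G_V(z)\}$ and performs a half-angle dichotomy: either $c_F(V,G_V(Z))\le\cos(\theta_F/2)$, in which case $\|P_V(z)-\hat z\|\le\cos(\theta_F/2)\|z-\hat z\|$ and the $V$-term alone suffices, or the reverse inequality forces $c_F(G_V(Z),W)\le\cos(\theta_F/2)$ and the $W$-term does the job; this yields the explicit rate $r_{[V,W]}=\max_{r\in\{r_V,r_W\}}\sqrt{r^2+(1-r^2)\tfrac{1+c_F^2}{2}}$. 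Your route avoids auxiliary lines and intermediate angles entirely: the orthogonal splitting $z-q=u+v$ with $v\in\hat V\cap(\hat V\cap\hat W)^\perp$ makes the Friedrichs bound $\|v-P_{\hat W}v\|\ge\sqrt{1-c_F^2}\,\|v\|$ available directly, and the perturbation $e_V$ is absorbed into a $2\times2$ quadratic form in $(\|u\|,\|v\|)$ whose positive-definiteness follows from its determinant $(1-r_V^2)(1-r_W^2)(1-c_F^2)>0$ together with the positive diagonal entry $1-r_V^2r_W^2$ (the latter is needed for your ``hence''). Your approach is more elementary and self-contained; the paper's trade-off is a closed-form constant.
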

\begin{proof}
Let  $G_V:\RR^n\to\RR^n$ and $G_W:\RR^n\to\RR^n$ be two \emph{best approximation mappings} with respect to $V$ and $W$, respectively, and corresponding constants  $0\le r_V<1$ and $0\le r_W<1$.

In order to prove item (i)
of Definition \ref{def:approxproj} for the composition $G\coloneqq G_W \circ G_V$ w.r.t. $V \cap W$ we are going to combine Pythagoras equations with properties of projections. 
Note that we have to prove that  $P_{V\cap W}(G(z)) = P_{V\cap W}(z)$ for all $z \in\RR^n$.

Let us take an arbitrary, but fixed, $z\in\RR^n$ and set $\hat z\coloneqq P_{V\cap W}(z)$. The definition of $\hat z$ implies that $\hat z\in V\cap W$. In particular, $\hat z\in V$ and we have 
\begin{equation}\label{Pyth1}
\|z-\hat z\|^2=\|z-P_V(z)\|^2+\|P_V(z)-\hat z\|^2.
\end{equation}
Since $P_{V\cap W}(P_{V}(z))\in V$,  we can write
\begin{equation}\label{Pyth1.A}
\|z-P_{V\cap W}(P_{V}(z))\|^2=\|z-P_V(z)\|^2+\|P_V(z)-P_{V\cap W}(P_{V}(z))\|^2.
\end{equation}
Of course, $\|z-\hat z\|\leq \|z-P_{V\cap W}(P_{V}(z))\|$. Using this fact and subtracting \eqref{Pyth1.A} from \eqref{Pyth1}
yields $\|P_V(z)-\hat z\|\leq \|P_V(z)-P_{V\cap W}(P_{V}(z))\|$. By uniqueness of projections onto closed convex sets, we conclude that 
\[\label{eq:zhat=PVW-PVz}
\hat z=P_{V\cap W}(P_{V}(z)).
\]
The fact that both $\hat z$ and $P_{V\cap W}(G_V(z))$ lie in $V$ allows us to derive further Pythagoras relations
\begin{equation}
\|G_V(z)-\hat z\|^2=\|G_V(z)-P_V(G_V(z))\|^2+\|P_V(G_V(z))-\hat z\|^2
\end{equation}
and
\begin{equation}
\|G_V(z)-P_{V\cap W}(G_V(z))\|^2=\|G_V(z)-P_V(G_V(z))\|^2+\|P_V(G_V(z))-P_{V\cap W}(G_V(z))\|^2.
\end{equation}
Since $G_V$ is a $V$-BAM, it holds that $P_V(G_V(z))=P_V(z)$ and the previous equations reduce to
\begin{equation}\label{Pyth2}
\|G_V(z)-\hat z\|^2=\|G_V(z)-P_V(z)\|^2+\|P_V(z)-\hat z\|^2
\end{equation}
and
\begin{equation}\label{Pyth2.A}
\|G_V(z)-P_{V\cap W}(G_V(z))\|^2=\|G_V(z)-P_V(z)\|^2+\|P_V(z)-P_{V\cap W}(G_V(z))\|^2.
\end{equation}
As proved above in \eqref{eq:zhat=PVW-PVz}, $\hat z=P_{V\cap W}(P_{V}(z))$, which implies that $\|P_V(z)-\hat z\|\leq \|P_V(z)-P_{V\cap W}(G_V(z))\|$. This inequality, together with \eqref{Pyth2} and \eqref{Pyth2.A}, gives us $\|G_V(z)-\hat z\| \leq \|G_V(z)-P_{V\cap W}(G_V(z))\|$. Then, as $P_{V\cap W}(G_V(z))$ is uniquely defined and $\hat z \in  V\cap W$, we must have 

\[\label{eq:projVcapWG_V-zhat}
P_{V\cap W}(G_V(z))=\hat z.
\]

Our proof towards item (i) of Definition \ref{def:approxproj} continues with similar arguments, now regarding $W$. By Pythagoras we get
\begin{equation}
\|G_V(z)-P_{V\cap W}(G_V(z))\|^2=\|G_V(z)-P_W(G_V(z))\|^2+\|P_W(G_V(z))-P_{V\cap W}(G_V(z))\|^2
\end{equation}
and
\begin{equation}
\|G_V(z)-P_{V\cap W}(G_W(G_V(z)))\|^2=\|G_V(z)-P_W(G_V(z))\|^2+\|P_W(G_V(z))-P_{V\cap W}(G_W(G_V(z)))\|^2.
\end{equation}
Since we proved that $P_{V\cap W}(G_V(z))=\hat z$,  taking into account that $G_W$ is a $W$-BAM,  which provides  $ P_W(G_W(G_V(z))) = P_W(G_V(z))$, and bearing in mind that $G(z)=G_W(G_V(z))$, we can rewrite the equations above as
\begin{equation}\label{Pyth.B1}
\|G_V(z)-\hat z\|^2=\|G_V(z)-P_W(G(z))\|^2+\|P_W(G(z))-\hat z\|^2
\end{equation}
and
\begin{equation}\label{Pyth.B2}
\|G_V(z)-P_{V\cap W}(G(z))\|^2=\|G_V(z)-P_W(G(z))\|^2+\|P_W(G(z))-P_{V\cap W}(G(z))\|^2.
\end{equation}
From the definition of Euclidean projection it follows that $ \|G_V(z)-\hat z\|\le \|G_V(z)-P_{V\cap W}(G_V(z))\| $, because $\hat z$ realizes the distance of $G_{V}(z)$ to $V\cap W$. This, combined with \eqref{Pyth.B1} and \eqref{Pyth.B2}, leads to $\|P_W(G(z))-\hat z\|\le \|P_W(G(z))-P_{V\cap W}(G(z))\|$. 

We can derive two additional Pythagoras relations
\begin{equation}\label{Pyth.C1}
\|G(z)-\hat z\|^2=\|G(z)-P_W(G(z))\|^2+\|P_W(G(z))-\hat z\|^2
\end{equation}
and
\begin{equation}\label{Pyth.C2}
\|G(z)-P_{V\cap W}(G(z))\|^2=\|G(z)-P_W(G(z))\|^2+\|P_W(G(z))-P_{V\cap W}(G(z))\|^2.
\end{equation}
We have just seen that $\|P_W(G(z))-\hat z\|\le \|P_W(G(z))-P_{V\cap W}(G(z))\|$, which together with \eqref{Pyth.C1} and \eqref{Pyth.C2} yields $\|G(z)-\hat z\| \leq \|G(z)-P_{V\cap W}(G(z))\|$. Hence, $P_{V\cap W}(G(z)) = \hat z =  P_{V\cap W}(z)$, which fulfills condition (i) of \Cref{def:approxproj} for $G$ w.r.t. $V\cap W$.

Let us now   address item  (ii) of \Cref{def:approxproj} for $G$ w.r.t. $V\cap W$. We have to prove that there exists a nonnegative constant $0\le r_{V\cap W}<1$ so that, for all $z\in\RR^n$, 
\begin{equation} \label{MainIneqLemmaG}
\|G(z)-P_{V\cap W}(z)\|\leq r_{V\cap W} \|z-P_{V\cap W}(z)\|.
\end{equation} 
Again,  let $z\in\RR^n$ be arbitrary,  fixed  and  $\hat z =  P_{V\cap W}(z)$. If $z = \hat z$, \eqref{MainIneqLemmaG} is fulfilled   for any nonnegative constant as $G(z)$ will be equal to $ P_{V\cap W}(z)$. In fact, $G_V$ being a $V$-BAM, together with $z =\hat z $,   gives us
\[
\| G_{V}(z) - \hat z\|  = \| G_{V}(z) - P_{V}(z)\| \leq r_{V}\|z - P_{V}(z) \| = r_{V}\|z - \hat z \| = 0.
\]
Thus, $ G_{V}(z) = \hat z$.  On the other hand, $G_W$ is a $W$-BAM, so
\[
\| G(z) -  z\| = \| G(z) - \hat z\|  = \| G_W(G_V(z)) - \hat z\| =    \| G_W(\hat z) - \hat z\|  = \| G_W(\hat z) - P_{W}(\hat z)\| \leq r_{W}  \|\hat z - P_{W}(\hat z) \| = 0.
\]
This means that, if $z=\hat z$,  $G(z) =  z$ and the left-hand side of \eqref{MainIneqLemmaG} is equal to zero and  this inequality  holds for  any nonnegative constant $r_{V\cap W}$. 

Therefore, from now on, assume $z\neq \hat z$.  We will construct  $r_{V\cap W}$  upon the constants $r_V\in[0,1)$, $r_W\in[0,1)$ , and $c_F\in[0,1)$, the latter the cosine of the Friedrichs angle $\theta_F\in(0,\frac{\pi}{2}]$ between $V$ and $W$.

It will be key to look at the angle $\alpha$ between vectors $z-\hat z$ and $P_V(z) - \hat z$. Note first that $\alpha\in [0,\frac{\pi}{2}]$, since from  \eqref{Pyth1} the triangle with vertices  $z$, $\hat z$ and $P_V(z)$ has a right angle at $P_V(z)$. Also, of course, 
\[\label{eq:cosalpha}
\cos \alpha = \dfrac{\|P_V(z) - \hat z\|}{\|z-\hat z\|}.
\]
Moreover, by using equation \eqref{Pyth2}, the $V$-BAM hypothesis  $\|G_V(z)-P_V(z)\|\leq r_V \|z-P_V(z)\|$, equation \eqref{Pyth1} and that $\cos \alpha \leq 1$,  we conclude that 
\begin{align}                      
  \|G_{V}(z)-\hat z\|^2   & =\|G_V(z)-P_V(z)\|^2+\|P_V(z)-\hat z\|^2 \notag \\ 
                    & \leq r_V^2\|z-P_V(z)\|^2+\|P_V(z)-\hat z\|^2 \notag \\
                   &  = r_V^2\left(\|z-\hat z\|^2-\|P_V(z)-\hat z\|^2 \right)+\|P_V(z)-\hat z\|^2 \notag \\
                  & = r_V^2\|z-\hat z\|^2+(1-r_V^2)\|P_V(z)-\hat z\|^2  \notag \\      
                  &  =  r_V^2\|z-\hat z\|^2+(1-r_V^2)\cos^{2}\alpha\|z-\hat z\|^2  \label{eq:GV_cosalpha}  \\ 
                  & \leq   \|z-\hat z\|^2. \label{eq:GV_z-zhat}               
\end{align}
Now, we split our analysis in two cases: $\alpha\in [\frac{\theta_F}{2},\frac{\pi}{2}]$;    $\alpha\in [0,\frac{\theta_F}{2})$.

\noindent \textbf{Case 1:} $\alpha\in [\frac{\theta_F}{2},\frac{\pi}{2}]$. 

In this case, $ \cos \alpha  \leq \cos \frac{\theta_{F}}{2}$. This, combined with \eqref{eq:GV_cosalpha} and the fact that  $\cos \frac{\theta_{F}}{2} = \sqrt{\frac{1+c_F}{2}}$  provides 
\begin{align}                      
  \|G_{V}(z)-\hat z\|^2  & \leq  r_V^2\|z-\hat z\|^2+(1-r_V^2)\cos^{2}\alpha\|z-\hat z\|^2  \notag \\ 
                         & \leq r_V^2\|z-\hat z\|^2+(1-r_V^2)\cos^{2}\frac{\theta_{F}}{2}\|z-\hat z\|^2  \notag \\  
                         & = \left(r_V^2+(1-r_V^2)\frac{1+c_F}{2}\right)\|z-\hat z\|^2. \label{eq:r_1.0}               
\end{align}
Since $r_V^2<1$ and $\frac{1+c_F}{2}<1$,  we have $(1-r_V^2)\frac{1+c_F}{2} <  (1-r_V^2)$. 
Then,
\[
r_V^2+(1-r_V^2)\frac{1+c_F}{2} < r_V^2 + (1-r_V^2) =  1.
\]
Since $G_W$ is a $W$-BAM, we have $ P_W(G_W(G_V(z))) = P_W(G_V(z))$ and  $\|G_W(G_V(z))-P_W(G_V(z))\|  \leq r_{W}\|G_V(z)-P_W(G_V(z))\|$, with $r_{W}\in [0,1)$. So,  we can write $\|G(z)-P_W(G(z))\| \leq \|G_V(z)-P_W(G(z))\|$, which combined  with  \eqref{Pyth.B1} and \eqref{Pyth.C1}, gives us 
\[
\|G(z)-\hat z\| \leq \|G_V(z)-\hat z\|.
\]
Hence, this inequality and \eqref{eq:r_1.0} imply that 
\[
\label{eq:Gr_1}
\|G(z)-\hat z\| \leq r_1\|z-\hat z\|, 
\]
with $r_1\in [0,1)$, given by 
\[\label{eq:r_1}
r_1 \coloneqq \sqrt{r_V^2+(1-r_V^2)\frac{1+c_F}{2}}.
\]

\noindent \textbf{Case 2:} $\alpha\in [0,\frac{\theta_F}{2})$.

In this case we initially consider the triangle of vertexes  $G_V(z)$, $\hat z$ and $P_W(G_V(z))$.  Since $G_{V}$ is a $V$-BAM, $P_V(G_V(z)) = P_V(z)$.  We will be particularly interested in the angle  $\phi$  between $G_V(z)-\hat z$ and $P_V(z)-\hat z$, when these vectors are nonzero. The vector $P_V(z)-\hat z$ is automatically nonzero, because of $\alpha < \frac{\pi}{2}$  and   \eqref{eq:cosalpha}. If the  vector $G_V(z)-\hat z$ is zero, we get the desired result as shown below.

Suppose  $G_V(z) = \hat z$, then $G(z) = G_W(G_V(z)) =  G_W(\hat z)$ and it is easy to verify that $G_W(\hat z) = \hat z$. Indeed, 
\[
\| G_W(\hat z) -  \hat z\| =\| G_W(\hat z) -  P_W(\hat z)\| \leq r_{W} \| \hat z -  P_W(\hat z)\| =     \| \hat z - \hat z\|  =  0.
\]
So, $G(z)  = \hat z$ and the left-hand side of \eqref{MainIneqLemmaG} is equal to zero and  this inequality is fulfilled  for  any nonnegative constant $r_{V\cap W}$. 

Assume for the rest of the proof that $G_V(z) \neq \hat z$.  Thus, 
\[
\cos\phi = \frac{\|P_V(z)-\hat z\|}{\|G_V(z)-\hat z\|} \geq \frac{\|P_V(z)-\hat z\|}{\|z-\hat z\|} = \cos \alpha,
\]
 where the inequality is due to  \eqref{eq:GV_z-zhat}. Therefore, $0\leq \phi\leq \alpha$ and, consequently, $\phi\in [0,\frac{\theta_{F}}{2})$.

We consider now another triangle, the one of vertexes $P_V(z)$, $\hat z$ and $P_W(G_V(z))$. If the vertexes $\hat z$ and $P_W(G_V(z))$ coincide, we get the following bound:
\begin{align}
\| G(z) -  \hat z\| & = \| G_W(G_V(z)) -  \hat z\| \\ 
                    & = \| G_W(G_V(z)) -  P_W(G_V(z)) \| \\ 
                    & \leq r_{W}\|  G_V(z) -  P_W(G_V(z)) \| \\
                    & \leq r_{W}\|  G_V(z) -  \hat z \| \\ 
                    & \leq r_{W}\| z - \hat z \|, \label{eq:G_rW}
\end{align}
where we used, respectively, the definition of $G$, the current assumption  $P_W(G_V(z))=\hat z$,  the hypothesis that $G_W$ is a $W$-BAM, the fact that $\hat z$ lies in $W$  and \eqref{eq:GV_z-zhat}.

For the rest of the proof, assume also that $P_W(G_V(z))\neq \hat z$ and define  $\beta$, the angle between  the nonzero vectors $P_V(z)-\hat z$ and $P_W(G_V(z))-\hat z$. It is easy to see that the former belongs to $\hat V$ and the latter belongs to $\hat W$, where $\hat V$ and $\hat W$ are the subspaces given by $ V - \hat z$ and $  W - \hat z$, respectively. Also, recall from \eqref{eq:zhat=PVW-PVz} that $P_{V\cap W}(P_V(z)) = \hat z$ and therefore  $ P_V(z)-\hat z \in (\hat V \cap \hat  W)^{\perp}$.

We  rewrite \eqref{Pyth.B1} using the $W$-BAM property $P_W(G(z)) = P_W(G_W(G_V(z)))= P_W(G_V(z))$ as 
\[\label{Pyth.B1-rewritten}
\|G_V(z)-\hat z\|^2=\|G_V(z)-P_W(G_V(z))\|^2+\|P_W(G_V(z))-\hat z\|^2
\]
and Pythagoras can be employed as
\[
\|G_V(z)-P_{V\cap W}(P_W(G_V(z)))\|^2=\|G_V(z)-P_W(G_V(z))\|^2+\|P_W(G_V(z))-P_{V\cap W}(P_W(G_V(z)))\|^2.
\]
On the one hand, $\|P_W(G_V(z))-P_{V\cap W}(P_W(G_V(z)))\|\leq\|P_W(G_V(z))-\hat z\| $. On the other hand, $\|G_V(z)-\hat z\|\leq \|G_V(z)-P_{V\cap W}(P_W(G_V(z)))\| $ because  we have already seen in \eqref{eq:projVcapWG_V-zhat} that $P_{V\cap W}(G_V(z)) = \hat z$. Hence, $P_{V\cap W}(P_W(G_V(z))) = \hat z$ and $ P_W(G_V(z))-\hat z \in (\hat V \cap \hat  W)^{\perp}$.

We can then  use the definition of the cosine of the Friedrichs angle $\theta_{F}$ between $V$ and $W$ and get
\[
 \cos\beta=\scal{\frac{P_W(G_V(z))-\hat z} {\|P_W(G_V(z))-\hat z\|}}{\frac{P_V(z)-\hat z} { \|P_V(z)-\hat z\|}}\leq c_F = \cos\theta_{F},
\]
which  provides $ \beta \in [\theta_{F},\pi] $.

 By now we have the nonzero vectors $G_V(z) - \hat z$, $P_{V}(z) - \hat z$ and $P_W(G_V(z))-\hat z$. The vectors $G_V(z) - \hat z$ and $P_{V}(z) - \hat z$ form  angle $\phi\in [0,\frac{\theta_F}{2}) $,  vectors $P_{V}(z) - \hat z$ and $P_W(G_V(z))-\hat z$ form angle $\beta \in [\theta_{F},\pi]$. Let $\gamma$ be the angle between vectors $G_V(z) - \hat z$ and  $P_W(G_V(z))-\hat z$. Obviously, by Pythagoras, $\gamma \in [0, \frac{\pi}{2}]$ and 
 \[\label{eq:cosinegamma}
\cos \gamma  = \frac{\|P_W(G_V(z))-\hat z\|}{\|G_V(z)-\hat z\|}.
 \] 
 More than that, from \Cref{prop:gamma}, we conclude that $\gamma \in [\frac{\beta}{2},\frac{\pi}{2}]$. In particular, we get $\gamma \geq \frac{\beta}{2} \geq  \frac{\theta_{F}}{2}$ and hence  
 \[\label{eq:boundcosinegamma}
  \cos \gamma \leq \cos \frac{\theta_{F}}{2}  = \sqrt{\frac{1+c_F}{2}}.
 \]
Then, enforcing similar arguments as in Case 1, we obtain
\begin{align}
\|G(z)-\hat z\|^2      & = \|G_W(G_V(z))-\hat z\|^2 \\
                       & = \|G_W(G_V(z))-P_W(G_V(z))\|^2+\|P_W(G_V(z))-\hat z\|^2 \\
                       & \leq r_W^2\|G_V(z)-P_W(G_V(z))\|^2+\|P_W(G_V(z))-\hat z\|^2 \\ 
                       & = r_W^2(\|G_V(z)-P_W(G_V(z))\|^2+\|P_W(G_V(z))-\hat z\|^2)+(1-r_W^2)\|P_W(G_V(z))-\hat z\|^2 \\
                       & = r_W^2\|G_V(z)-\hat z\|^2+(1-r_W^2)\|P_W(G_V(z))-\hat z\|^2 \\
                       & = r_W^2\|G_V(z)-\hat z\|^2+(1-r_W^2)\cos^2\gamma \|G_V(z)-\hat z\|^2 \\ 
                       & \leq \left(r_W^2+(1-r_W^2)\frac{1+c_F}{2}\right)\|G_V(z)-\hat z\|^2 \\
                       & \leq \left(r_W^2+(1-r_W^2)\frac{1+c_F}{2}\right)\|z-\hat z\|^2.\label{eq:G_r2}
\end{align}
The first line corresponds to the definition of $G$, the second is by Pythagoras and the third holds because $G_W$ is a $W$-BAM. The fourth line is a rearrangement of terms, followed by Pythagoras in the fifth. Then, \eqref{eq:cosinegamma} and \eqref{eq:boundcosinegamma} are  employed respectively. At last, we used \eqref{eq:GV_z-zhat}.

Analogously to the proof that $r_1 = \sqrt{r_V^2+(1-r_V^2)\frac{1+c_F}{2}} $ is strictly smaller than $1$, we can see that  
\[
r_2 \coloneqq \sqrt{r_W^2+(1-r_W^2)\frac{1+c_F}{2}} <1 .
\]
Finally, we can gather Cases 1 and 2.  From   \eqref{eq:r_1}, \eqref{eq:G_rW} and  \eqref{eq:G_r2},   we have $\|G(z)-\hat z\| \leq  r_{V\cap W}\|z-\hat z\|$ for all $z\in\RR^n$, with $r_{V\cap W}\in[0,1)$ given by 
\(r_{V\cap W}\coloneqq \max\left\{r_1, r_W, r_2\right\} = \max\left\{r_1, r_2\right\}.
\)\qed
\end{proof}

We are going to see next that  Lemma \ref{LemmaAux1} can be extended to the case of $\ell$ affine subspaces, with $\ell$ being any positive integer.

\begin{theorem}[finite composition of best approximation mappings]\label{thm:BAM} Let us consider an indexed family of $\ell$ affine subspaces $\mathbf{W}=\{W_1,W_2,\dots,W_{\ell}\}$ of $\RR^n$ with nonempty intersection $S_\ell$. Assume that each $G_{W_j}:\RR^n\to\RR^n$ ($j=1,\ldots,\ell$) is $W_j$-BAM. Then, $G\coloneqq G_{W_\ell}\circ \cdots \circ G_{W_2} \circ G_{W_1}$ is a $S_{\ell}$-BAM. 
\end{theorem}
\begin{proof} The proof follows by an induction argument on $\ell$, the number of affine subspaces.

If $\ell=1$, we have $G = G_{W_{1}}$ and then $G$ is a $S_\ell$-BAM.

Assume the result for a fixed $\ell$.
Let $  \widehat{\mathbf{W}} \coloneqq \mathbf{W} \cup \{W_{\ell+1}\}$, where  $W_{\ell+1}$ is an affine subspace such that it has  nonempty intersection $S_{\ell+1}$ with $S_\ell$, and let $G_{W_{\ell+1}}$ be a $W_{\ell+1}$-BAM. 
 Employing  \Cref{LemmaAux1} with $S_\ell$ and $W_{\ell+1}$ playing the role of $V$ and $W$, respectively, and $\widehat G$ and $G_{W_{\ell+1}}$ playing the role of $G_V$ and $G_W$, respectively, we get that $\widehat G \coloneqq  G\circ  G_{W_{\ell+1}}$  is a $S_{\ell+1}$-BAM.
\qed\end{proof}

In the next section, we define the block-wise circumcenter operator and will prove that it is a \emph{best approximation mapping}.

\section{The block-wise circumcentered--reflection method}
\label{sec:bwCRM}
The main purpose of this paper is applying the recently developed circumcentered--reflection method (CRM) \cite{Behling:2017bz} to solve problem \eqref{BestProblem} by taking advantage of a block-wise structure. This idea may be beneficial in certain problems coming from the discretization of partial differential equations as we describe and illustrate in our numerical section.  We remind that CRM iterates by taking an ordered round of successive reflections onto affine subspaces and then it chooses the new iterate by means of equidistance to the reflected points, which explains the usage of the geometric term \emph{circumcenter}.

Let us give the definition of the circumcenter of a  block of finitely many affine subspaces. 

\begin{definition}[circumcentered-reflection for a block]\label{CircDef}
Let $\mathcal{B}\coloneqq(U_1,U_2,\ldots,U_q)$ be a block of ordered affine  subspaces, where $q\ge 1$ is a fixed integer. Suppose also that the intersection $S_\mathcal{B}\coloneqq \bigcap_{i=1}^q U_i$ is nonempty. 
The circumcenter of the block $\mathcal{B}$ at the point $z\in \RR^n$  is denoted by $C_\mathcal{B}(z)$ and defined by the following properties:
	\item [ (i)] $C_\mathcal{B}(z)\in W_z\coloneqq\aff\{z,R_{U_1}(z),R_{U_2}R_{U_1}(z),\ldots ,R_{U_q}\cdots R_{U_2}R_{U_1}(z)\}$;
	\item [ (ii)]$\|z-C_\mathcal{B}(z)\|=\|R_{U_1}(z)-C_\mathcal{B}(z)\|=\cdots=\|R_{U_q}\cdots R_{U_2}R_{U_1}(z)-C_\mathcal{B}(z)\|$.
\end{definition}

It is worth noting that the order in which reflections are composed affects the outcome circumcenter. If not said otherwise, we use  increasing order of indices for the computation of a circumcenter.

Before presenting the definition of the block-wise circumcentered--reflection method (Bw-CRM), we list two consequences of results from Lemma 3.1 of \cite{Behling:2017bz} that will be at the core of our convergence analysis for Bw-CRM.

\begin{lemma}[good definition of CRM]
Consider a block of affine subspaces $\mathcal{B}=(U_1,U_2,\ldots,U_q)$ with $S_\mathcal{B}=\cap_{i=1}^q U_i$ nonempty. For any $z\in\RR^n$, $C_\mathcal{B}(z)$ is well and uniquely defined.
\end{lemma}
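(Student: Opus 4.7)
The plan is to establish existence and uniqueness of $C_{\mathcal B}(z)$ separately; both should follow quickly from the isometry property of reflections together with the nonemptiness of $S_{\mathcal B}$. For existence, first pick any $s\in S_{\mathcal B}$. Since $s\in U_i$ for every $i$, each reflection $R_{U_i}$ fixes $s$, and since each $R_{U_i}$ is an isometry (Fact~\ref{Fato}(iii)), iteration yields $\|s-R_{U_i}\cdots R_{U_1}(z)\|=\|s-z\|$ for every $i=0,1,\ldots,q$ (with the empty composition read as $\Id$). Thus $s$ is already equidistant from $z,R_{U_1}(z),\ldots,R_{U_q}\cdots R_{U_1}(z)$, but $s$ need not lie in $W_z$. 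To land inside $W_z$, set $s^\ast\coloneqq P_{W_z}(s)$ and apply Pythagoras (Fact~\ref{Fato}(ii)) at each listed point $w_i\in W_z$:
\[
\|s-w_i\|^2=\|s-s^\ast\|^2+\|s^\ast-w_i\|^2.
\]
Since the left-hand side is independent of $i$, so is $\|s^\ast-w_i\|$; hence $s^\ast$ satisfies both conditions (i) and (ii) of Definition~\ref{CircDef}.

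For uniqueness, suppose $c_1,c_2$ are two candidates. Writing $w_0\coloneqq z$ and $w_i\coloneqq R_{U_i}\cdots R_{U_1}(z)$ for $i=1,\ldots,q$, the equidistance condition (ii) expands, after squaring, to the $q$ linear equations $2\scal{c_j}{w_i-w_0}=\|w_i\|^2-\|w_0\|^2$ for $i=1,\ldots,q$ and $j\in\{1,2\}$. Subtracting pairwise gives $\scal{c_1-c_2}{w_i-w_0}=0$ for each $i$. But by (i), $c_1-c_2$ lies in $\lspan\{w_i-w_0:i=1,\ldots,q\}$, so $c_1-c_2$ is orthogonal to its own containing subspace and must vanish, yielding $c_1=c_2$.

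The argument is otherwise routine, and the only real conceptual point is existential: for an arbitrary finite point set in $\RR^n$ there need not exist any point in its affine hull equidistant from all of them, so some input beyond the bare definition is required. Here the nonemptiness of $S_{\mathcal B}$ supplies exactly that input, through the fixed-point-plus-isometry observation above, and the reasoning reproduces, for a single block, that of \cite[Lemma~3.1]{Behling:2017bz} cited as the source of the lemma.
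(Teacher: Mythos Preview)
Your argument is correct; the paper's own proof is simply a citation to \cite[Lemma~3.1]{Behling:2017bz}, whose content---project any $s\in S_{\mathcal B}$ onto $W_z$ for existence, then use orthogonality within $\lspan\{w_i-w_0\}$ for uniqueness---you have reproduced exactly, as you yourself note. The only cosmetic slip is that Fact~\ref{Fato}(iii) records affinity and $R_V^2=\Id$ rather than the isometry property you invoke; the equidistance $\|s-w_i\|=\|s-z\|$ is more directly justified by Fact~\ref{Fato}(ii) (Pythagoras with $s\in U_i$), or by the standard fact that reflections across affine subspaces preserve distances.
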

\begin{proof}
See \cite[Lemma 3.1]{Behling:2017bz}.
\qed\end{proof}

The circumcenter,  as above, is the intersection of suitable bisectors. Its computation  requires the resolution of a $q\times q$ linear system of equations. Details can be found in \cite[p.~161]{Behling:2017bz} and \cite[Theorem~4.1]{Bauschke:2018ut}.

The previous lemma established that  the circumcenter is well defined. We now recall that the circumcenter operator is a BAM.

\begin{theorem}[circumcenter operator is a BAM]\label{Teo2.3} Consider a block of affine subspaces $\mathcal{B}=(U_1,U_2,\ldots,U_q)$ with $S_\mathcal{B}=\cap_{i=1}^q U_i$ nonempty. Then, there exists a constant $r_\mathcal{B}\in [0,1)$ so that 
\begin{equation}
\|C_\mathcal{B}(z)-P_{S_\mathcal{B}}(z)\|\leq r_\mathcal{B} \|z-P_{S_\mathcal{B}}(z)\|,
\end{equation}
for all $z\in\RR^n$. Moreover, $P_{S_\mathcal{B}}(C_\mathcal{B}(z))=P_{S_\mathcal{B}}(z)$.
\end{theorem}
\begin{proof}
See \cite[Lemma 3.2]{Behling:2017bz}.
\qed\end{proof}

The previous theorem says that  $C_\mathcal{B}$ is a  $S_\mathcal{B}$-BAM.
In order to define our new circumcenter scheme, consider the following terminology. 

\begin{definition}[block partition] \label{BlockDef} We say that $\mathbf{B}=(\mathcal{B}_1,\mathcal{B}_2,\ldots,\mathcal{B}_p)$ is an ordered collection of blocks   (with cardinality $p$) for the ordered affine subspaces $U_1,U_2,\ldots,U_m$ if we can write $\mathcal{B}_1=(U_{q_0+1},U_{q_0+2},\ldots,U_{q_1})$, $\mathcal{B}_2=(U_{q_1+1},U_{q_1+2},\ldots,U_{q_2})$, $\ldots$, $\mathcal{B}_p=(U_{q_{p-1}+1},U_{q_{p-1}+2},\ldots,U_{q_p})$, with $q_0=0$ and $q_p=m$. We assume that every block $\mathcal{B}_i$ has size $q_i-q_{i-1}\geq 1$, $i=1,\ldots,p$.
\end{definition}
Note that in the previous definition we are simply selecting subsets of subspaces based on a partition of the set of indices, illustrated below 
\[
\mathbf{I}=\{1,2,\ldots,m\} = \{\underbrace{q_0+1,\ldots,q_1}_{\text{1st block indices}},\, \underbrace{q_1+1,\ldots,q_2}_{\text{2nd block indices}},\, \underbrace{q_2+1,\ldots,q_3}_{\text{3rd block indices}},\,\ldots, \, \underbrace{q_{p-1}+1,\ldots,q_p}_{\text{$p$-th block indices}}\}.
\]
We now define the block-wise circumcentered-reflection operator.
\begin{definition}[block-wise circumcentered-reflection] \label{def:bwcrm}Let $\mathbf{B}=(\mathcal{B}_1,\mathcal{B}_2,\ldots,\mathcal{B}_p)$ be an ordered collection of  blocks for the affine subspaces $U_1,U_2,\ldots,U_m$ and assume that increasing index order is taken for both blocks and subspaces. Then, for a point $z\in\RR^n$ we define the \emph{block-wise circumcentered-reflection} step $C_{\mathbf{B}}(z)$ by
\[\label{eq:BlockCircDef}
C_{\mathbf{B}}(z)\coloneqq C_{\mathcal{B}_p}\circ C_{\mathcal{B}_{p-1}}\circ \cdots \circ C_{\mathcal{B}_2}\circ C_{\mathcal{B}_1}(z).
\]
\end{definition}

A key result is presented in Section 3.1. It establishes linear convergence of the sequence $\left(C^k_{\mathbf{B}}(z)\right)_{k\in\NN}$ to $P_S(z)$. Our proof that Bw-CRM provides a sequence converging linearly to the solution of the best approximation problem \eqref{BestProblem} depends on some further auxiliary results, derived in the next section.

In the following section, the circumcenter operators for each block $C_{\mathcal{B}_j}$  will play the role of the \emph{best approximation mappings} $G_{W_j}$'s. Furthermore, $C_\mathbf{B}$ will play the role of $G$ in \Cref{thm:BAM}.

\subsection{Linear convergence of the block-wise circumcentered--reflection method}
\label{sec:bwCRM-linearconv}
Now, we summarize our result on Bw-CRM. Remind that $\mathbf{B}=(\mathcal{B}_1,\mathcal{B}_2,\ldots,\mathcal{B}_p)$ is a fixed ordered collection of ordered $p$ blocks for the affine subspaces $U_1,U_2,\ldots,U_m$. Recall the notation $S\coloneqq \cap_{i=1}^m U_i$ and $C_\mathbf{B}$ for the block-wise circumcentered-reflection operator regarding $\mathbf{B}$.  Due to the last auxiliary result, we easily derive linear convergence of Bw-CRM for solving problem \eqref{BestProblem}. Next,  we formally state that  $C_\mathbf{B}$ is a \emph{best approximation mapping} with respect to $S$.

\begin{theorem}[block-wise operator is a BAM]\label{MainTheorem}
Let  $C_\mathbf{B}$ be the block-wise circumcentered-reflection operator regarding $\mathbf{B}$. Then, there exists a constant $r_{\mathbf{B}}\in [0,1)$ so that 
\begin{equation}
\|C_{\mathbf{B}}(z)-P_S(z)\|\leq r_{\mathbf{B}} \|z-P_S(z)\|,
\end{equation}
for all $z\in\RR^n$. Moreover, $P_S(C_{\mathbf{B}}(z))=P_S(z)$ and the convergence of $(C^k_{\mathbf{B}}(z))_{k\in \NN}$ is linear to the unique solution $P_S(z)$, i.e.,
\[\lim_{k\to \infty}C^k_{\mathbf{B}}(z)=P_S(z).\]
Furthermore,  the global Q-linear rate is $r_{\mathbf{B}}\in [0,1)$, i.e.,
for all $k\in\NN$, 
\[\|C^k_{\mathbf{B}}(z)-P_S(z)\|\leq r^k_{\mathbf{B}} \|z-P_S(z)\|.\]
\end{theorem}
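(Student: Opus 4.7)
The plan is to package the statement as a direct corollary of Theorem~\ref{Teo2.3} and Corollary~\ref{lema3.2}. All the technical work (the Friedrichs-angle gain per composition, the invariance of $P_{V\cap W}$ under best approximation mappings, and the iteration of the rate) has already been done in Lemma~\ref{LemmaAux1} and its corollary, so what is left is mostly a bookkeeping argument identifying the circumcenter-per-block operators with the abstract best approximation mappings $G_{W_j}$ of Corollary~\ref{lema3.2}.

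First I would observe that, for each $j\in\{1,\dots,p\}$, the block $\mathcal{B}_j$ is a finite family of affine subspaces whose intersection $S_{\mathcal{B}_j}$ contains $S$ and is hence nonempty. Consequently, Theorem~\ref{Teo2.3} applies to every $\mathcal{B}_j$: the circumcenter operator $C_{\mathcal{B}_j}$ is well-defined on all of $\RR^n$, satisfies $P_{S_{\mathcal{B}_j}}(C_{\mathcal{B}_j}(z))=C_{\mathcal{B}_j}(P_{S_{\mathcal{B}_j}}(z))=P_{S_{\mathcal{B}_j}}(z)$ for every $z\in\RR^n$, and comes with a constant $r_{\mathcal{B}_j}\in[0,1)$ for which $\|C_{\mathcal{B}_j}(z)-P_{S_{\mathcal{B}_j}}(z)\|\le r_{\mathcal{B}_j}\|z-P_{S_{\mathcal{B}_j}}(z)\|$. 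In the terminology of Definition~\ref{approxproj}, this is precisely the statement that each $C_{\mathcal{B}_j}$ is a best approximation mapping onto the affine subspace $S_{\mathcal{B}_j}$.

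Next I would invoke the definition of a block partition (Definition~\ref{BlockDef}): since the index sets of the $\mathcal{B}_j$'s partition $\{1,2,\dots,m\}$, we have
\[
\bigcap_{j=1}^{p} S_{\mathcal{B}_j}\;=\;\bigcap_{j=1}^{p}\bigcap_{i\in\mathcal{B}_j} U_i\;=\;\bigcap_{i=1}^{m} U_i\;=\;S.
\]
Taking $\ell=p$ and $W_j=S_{\mathcal{B}_j}$ in Corollary~\ref{lema3.2}, with $G_{W_j}=C_{\mathcal{B}_j}$ and associated rates $r_{W_j}=r_{\mathcal{B}_j}\in[0,1)$, the corollary yields a constant $r_{\mathbf{B}}\in[0,1)$ such that the composition
\[
C_{\mathbf{B}}\;=\;C_{\mathcal{B}_p}\circ\cdots\circ C_{\mathcal{B}_2}\circ C_{\mathcal{B}_1}
\]
is itself a best approximation mapping onto $S$. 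This is exactly the first two assertions of the theorem: the contraction inequality $\|C_{\mathbf{B}}(z)-P_S(z)\|\le r_{\mathbf{B}}\|z-P_S(z)\|$ and the identity $P_S(C_{\mathbf{B}}(z))=C_{\mathbf{B}}(P_S(z))=P_S(z)$.

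Finally, I would iterate. Since $C_{\mathbf{B}}$ is a best approximation mapping onto $S$, the last assertion of Corollary~\ref{lema3.2} (applied to the singleton family $\{S\}$ with the mapping $C_{\mathbf{B}}$ itself, or equivalently a straightforward induction) gives $P_S(C_{\mathbf{B}}^k(z))=P_S(z)$ and the Q-linear bound $\|C_{\mathbf{B}}^k(z)-P_S(z)\|\le r_{\mathbf{B}}^k\|z-P_S(z)\|$ for all $k\in\NN$; letting $k\to\infty$ produces $\lim_k C_{\mathbf{B}}^k(z)=P_S(z)$. There is essentially no obstacle here, because the main difficulty---extracting a strict-contraction rate from the Friedrichs angle across composed affine projections---has been completely absorbed into Lemma~\ref{LemmaAux1}; the only thing to double-check is that the intersection identity $\bigcap_j S_{\mathcal{B}_j}=S$ makes Corollary~\ref{lema3.2} directly applicable, which is immediate from Definition~\ref{BlockDef}.
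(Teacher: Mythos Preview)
Your proposal is correct and is exactly the approach the paper takes: its proof consists of the single sentence ``It is a direct consequence of Theorem~\ref{Teo2.3} and Corollary~\ref{lema3.2},'' and you have simply unpacked that sentence by identifying each $C_{\mathcal{B}_j}$ as a best approximation mapping onto $S_{\mathcal{B}_j}$ via Theorem~\ref{Teo2.3}, noting $\bigcap_j S_{\mathcal{B}_j}=S$, and feeding this into Corollary~\ref{lema3.2}.
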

\begin{proof} Due to \Cref{def:bwcrm}, $C_{\mathbf{B}}$ is a composition of circumcenter operators, which of each is a BAM (\Cref{thm:BAM}) and thus, by  \Cref{Teo2.3}, it is itself a BAM. The  claims on the sequence  $(C^k_{\mathbf{B}}(z))_{k\in \NN}$ follow then from \Cref{prop:BAMsequence}.
\qed\end{proof}

\subsection{Connections between Bw-CRM and  MAP}
\label{sec:bwCRM-MAP}
Based on our papers \cite{Behling:2017da,Behling:2017bz} and on the previous results, we briefly discuss now some curious connections between Bw-CRM and the method of alternating projections (MAP). 

Our concept of \emph{best approximation mapping} is, by definition,  a relaxation of a projection operator.  With that said, the first relation between Bw-CRM and MAP we want to point out is that Bw-CRM happens to be a \emph{best approximation mapping}, as proven in the last section. Furthermore,  the well known linear convergence of MAP   for a finite number of intersecting affine subspaces~\cite[Theorems~9.31 and 9.33]{Deutsch:2001fl} follows as an immediate consequence of the result on best approximation mappings  stated in  \Cref{thm:BAM}.

Another connection between Bw-CRM and MAP follows from the fact that the projection of a point onto a closed convex set can be seen as the circumcenter regarding the given point and its reflection onto the corresponding set. In other words, if you have a point $z\in\RR^n$ and a closed convex set $U$, then $P_U(z)=\textrm{circumcenter}\{z,R_U(z)\}$ because $P_U(z)\in\aff\{z,R_U(z)\}$ and $\|z-P_U(z)\|=\|R_U(z)-P_U(z)\|$. Therefore, considering the notation from the previous section, we can observe that when all blocks $\mathcal{B}_i$'s have cardinality $1$, \emph{i.e.}, $p=m$ and $\mathcal{B}_i=(U_i)$ for all $i=1,\ldots,m$, we have that $C_{\mathcal{B}_i}$ is precisely the orthogonal projector onto $U_i$. Hence, the block-wise circumcentered-reflection operator $C_{\mathbf{B}}\coloneqq C_{\mathcal{B}_p} \circ \cdots \circ C_{\mathcal{B}_2}\circ C_{\mathcal{B}_1}$ coincides with the MAP operator $P_{U_m} \circ \cdots \circ P_{U_2}\circ P_{U_1}$.

In addition to having the aforementioned connections to MAP, we will see next that the full-block Bw-CRM, \emph{i.e.}, CRM itself, serves as a projector when the multi-set intersection regards only hyperplanes. CRM indeed finds the projection of any given point onto the intersection of hyperplanes in one single step. Perhaps, such a feature might be useful in the implementation of projection methods.

\section{One step convergence of CRM for hyperplane intersection}
\label{sec:onestepCRM-hyperplane}

The initial motivation in the development of our first circumcenter scheme in \cite{Behling:2017da} was defining a method that could handle the trivial problem of finding the intersection of two crossing lines in $\RR^2$ in one step. In the present section, this is done in dimension $n$ for hyperplanes.

The key ingredient that enables the full block Bw-CRM (original CRM) to converge in only one step for hyperplane intersection is that the orthogonal subspace to a given nonempty hyperplane always has dimension one. Interestingly, the first clues on this one-step convergence were indicated by our numerical experiments. Thanks to them we came up with the following results.

\begin{lemma}[one step convergence for full block Bw-CRM]\label{conv-1-step} Consider $\mathcal{H}=(H_1,H_2,\ldots,H_p)$  where $H_i$'s are hyperplanes with nonempty intersection $S_\mathcal{H}$ and let $C_\mathcal{H}$ be the CRM operator regarding $\mathcal{H}$. If $z\in\RR^n$ is so that for all $i=1,2,\ldots,p$ we have $R_{H_i}\cdots R_{H_1}(z)\notin H_i$, then the circumcenter $C_\mathcal{H}(z)$ is already the projection of $z$ onto $S_\mathcal{H}$.
\end{lemma}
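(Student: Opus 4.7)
The plan is to establish two properties of $c \coloneqq C_\mathcal{H}(z)$: first, that $c$ already lies in every $H_i$, hence in $S_\mathcal{H}$; second, that $c-z$ lies in the orthogonal complement of the parallel subspace to $S_\mathcal{H}$. Together these facts characterize the orthogonal projection of $z$ onto the affine subspace $S_\mathcal{H}$ via Fact~\ref{Fato}(i).

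To set things up I would abbreviate $z_0 \coloneqq z$ and $z_i \coloneqq R_{H_i}\cdots R_{H_1}(z)$, and let $n_i$ denote a unit normal to the hyperplane $H_i$. The key elementary observation is that $z_i - z_{i-1} = 2(P_{H_i}(z_{i-1}) - z_{i-1})$ is a scalar multiple of $n_i$, and it is \emph{nonzero} precisely because of the assumption $z_i \notin H_i$ (equivalently $z_{i-1} \notin H_i$, since $R_{H_i}$ fixes $H_i$ setwise). Moreover, the midpoint $\tfrac{1}{2}(z_{i-1}+z_i) = P_{H_i}(z_{i-1})$ belongs to $H_i$.

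For the first property I would exploit the equidistance condition from Definition~\ref{CircDef}(ii): since $\|c-z_{i-1}\| = \|c-z_i\|$, the point $c$ lies on the perpendicular bisector of the segment $[z_{i-1},z_i]$. By the preceding paragraph, this bisector is the affine hyperplane passing through $P_{H_i}(z_{i-1}) \in H_i$ with normal direction $n_i$, namely $H_i$ itself. Hence $c \in H_i$ for every $i = 1, \ldots, p$, so $c \in S_\mathcal{H}$.

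For the second property I would use Definition~\ref{CircDef}(i) to write $c - z \in \lspan\{z_1-z,\ldots,z_p-z\}$ and then telescope $z_i - z = \sum_{j=1}^i (z_j - z_{j-1})$ to conclude that this span sits inside $N \coloneqq \lspan\{n_1,\ldots,n_p\}$. The latter is exactly the orthogonal complement of the parallel subspace $\bigcap_i \hat H_i$ of $S_\mathcal{H}$, so $c - z$ is orthogonal to $S_\mathcal{H}$ in the required sense, and combining with $c \in S_\mathcal{H}$ gives $c = P_{S_\mathcal{H}}(z)$. The whole argument is short; the only place that needs care is the identification of the perpendicular bisector of $[z_{i-1},z_i]$ with the hyperplane $H_i$, and this is precisely where the non-degeneracy hypothesis enters, since otherwise $z_i = z_{i-1}$ and the bisector degenerates to all of $\RR^n$, providing no constraint forcing $c \in H_i$.
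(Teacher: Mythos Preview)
Your argument is correct, but it takes a genuinely different route from the paper's.

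Both proofs hinge on the same computation: writing $v_i \coloneqq z_i - z_{i-1}$, observing that each $v_i$ is a (nonzero, by the hypothesis) multiple of the normal $n_i$, and concluding that the linear span of the $v_i$'s equals $H_1^\perp + \cdots + H_p^\perp = S_{\mathcal H}^\perp$ (orthogonal complement of the parallel subspace). Where you diverge is in how you finish. The paper invokes \cite[Lemma~3.1]{Behling:2017bz} to identify $C_{\mathcal H}(z)$ as the orthogonal projection of $P_{S_{\mathcal H}}(z)$ onto $W_z$, so that $P_{S_{\mathcal H}}(z) - C_{\mathcal H}(z) \perp \hat W_z = S_{\mathcal H}^\perp$; it then combines this with $P_{S_{\mathcal H}}(C_{\mathcal H}(z)) = P_{S_{\mathcal H}}(z)$ (hence $P_{S_{\mathcal H}}(z) - C_{\mathcal H}(z) \perp S_{\mathcal H}$) to force the difference to vanish. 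You instead work only from Definition~\ref{CircDef}: the equidistance clause places $c$ on the perpendicular bisector of each segment $[z_{i-1},z_i]$, which under the hypothesis is exactly $H_i$, giving $c \in S_{\mathcal H}$ directly; the affine-hull clause then yields $c - z \in S_{\mathcal H}^\perp$, and the two together characterize $c$ as the projection. Your approach is more self-contained---it avoids importing the projection characterization of the circumcenter from the companion paper---and the perpendicular-bisector step makes the role of the non-degeneracy hypothesis especially transparent. The paper's approach, on the other hand, situates the result within the CRM framework already built and reuses its structural lemmas.
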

\begin{proof}  Without loss of generality, we assume that $H_i$'s are subspaces, as their intersection $S_\mathcal{H}$ is nonempty.

It was proven in \cite[Lemma 3.1]{Behling:2017bz} that $C_\mathcal{H}(z)$ is precisely the projection of $P_{S_\mathcal{H}}(z)$ onto 
\[
W_z= \aff\{z,R_{H_1}(z),R_{H_2}R_{H_1}(z),\ldots ,R_{H_p}\cdots R_{H_2}R_{H_1}(z)\}.
\]
Therefore, by considering the subspace  $\hat{W}_{z}\coloneqq W_{z} - C_\mathcal{H}(z)$, we have
\begin{equation}\label{eq:conv-1-step-perp1}
P_{S_\mathcal{H}}(z) - C_\mathcal{H}(z)\perp \hat{W}_{z}.
\end{equation}
Let $v_1\coloneqq R_{H_1}(z)-z$, $v_2\coloneqq R_{H_2}R_{H_1}(z) - R_{H_1}(z), \ldots$,  $v_p\coloneqq R_{H_p}\cdots R_{H_2}R_{H_1}(z) - R_{H_{p-1}}\cdots R_{H_2}R_{H_1}(z)$. Clearly, $v_{i}\in \hat{W}_{z}$, for $i=1,\ldots, p$ and $\hat{W}_{z} = \lspan\{v_{1},v_{2},\ldots, v_{p}\}$. Also, from the definition of reflection,  we have $v_{i}\perp  H_{i}$, for all $i=1,\ldots, p$. 

By taking into account the hypothesis $R_{H_i}\cdots R_{H_1}(z)\notin H_i$, it is straightforward to conclude that all $v_{i}$'s are not zero. Then, since each $H_{i}$ is a hyperplane, we have 
\[
\lspan\{v_{i}\} = H_{i}^{\perp},\; i=1,\ldots,p.
\]
Now, linear algebra gives us 
\[\begin{aligned}
\hat{W}_{z} & = \lspan\{v_{1},v_{2},\ldots,v_{p}\} \\
			& = \lspan\{v_{1}\} + \lspan\{v_{2}\} + \cdots + \lspan\{v_{p}\} \\ 
			& = H_{1}^{\perp} + H_{2}^{\perp} + \cdots + H_{p}^{\perp} \\
			& = \lspan\{H_{1}^{\perp} \cup H_{2}^{\perp} \cup \cdots \cup H_{p}^{\perp}\}  \\ 
			& = \left(H_{1} \cap H_{2} \cap \cdots \cap H_{p}\right)^{\perp}\\
			& = S_\mathcal{H}^{\perp}
\end{aligned}\]
and from \eqref{eq:conv-1-step-perp1} we have
\begin{equation}\label{eq:conv-1-step-perp2}
P_{S_\mathcal{H}}(z) - C_\mathcal{H}(z)\perp S_\mathcal{H}^{\perp}.
\end{equation}
We have shown in \cite{Behling:2017bz,Behling:2017da} that  $P_{S_\mathcal{H}}(C_\mathcal{H}(z)) = P_{S_\mathcal{H}}(z)$ and because $P_{S_\mathcal{H}}(C_\mathcal{H}(z)) - C_\mathcal{H}(z)$ 	is orthogonal to $S_\mathcal{H}$, it follows that
\begin{equation}\label{eq:conv-1-step-perp3}
P_{S_\mathcal{H}}(z) - C_\mathcal{H}(z)\perp S_\mathcal{H}.
\end{equation}
The combination of \eqref{eq:conv-1-step-perp2} and \eqref{eq:conv-1-step-perp3} implies that $P_{S_\mathcal{H}}(z) - C_\mathcal{H}(z) = 0,$
that is, 
\[
 C_\mathcal{H}(z) = P_{S_\mathcal{H}}(z),
\]
which completes the  proof. \qed\end{proof}

We observe that one can easily construct an example with two lines playing the role of hyperplanes in $\RR^2$ violating the hypothesis in Lemma \ref{conv-1-step} for certain initial points, where indeed the one step convergence of CRM is lost. We might then ask if at least finite convergence of CRM can always be expected in the case of hyperplane intersection. Although we lean towards a positive answer to this interesting theoretical question, we note that it is essentially irrelevant. There are at least two reasons for that. The first is that violating $R_{H_i} \cdots R_{H_1}(z)\notin H_i$ is completely ``{\emph{bad luck}}''.   More formally, one can actually show that the set $ \{z\in\RR^n \mid R_{H_i} \cdots R_{H_1}(z)\notin H_i, \forall i=1,2,\ldots,p\}$ 
is dense in $\RR^n$ (see further comments at the end of the section). The second reason why having $z$ in the complement of the previous set, namely \emph{bad luck},  is not really an issue, is that we can derive a simple and cheap procedure to rewrite our best approximation problem in an equivalent way such that CRM solves the reformulation in one single step. Next we describe this procedure upon a lemma.

\begin{lemma}[procedure for dealing with \emph{bad luck}]\label{lemma:procedure} Consider $\mathcal{H}=(H_1,H_2,\ldots,H_p)$ where the $H_i$'s are hyperplanes with nonempty intersection $S_\mathcal{H}$. Let $z\in\RR^n$ and assume the existence of a smallest index $\check \imath$ in $\{1,2,\ldots,p\}$ for which $\check z\coloneqq  R_{H_{\check \imath}} \cdots R_{H_2}R_{H_1}(z)\in H_{\check \imath}$. Denote by $\check a$ any given non-null orthogonal vector to the hyperplane $H_{\check \imath}$ and let us write $z_{rep}\coloneqq z+tR_{H_1}R_{H_2}\cdots R_{H_{\check\imath-2}}R_{H_{\check \imath-1}}R_{H_{\check \imath}}(\check a)$, where ``\emph{rep}'' stands for the idea of replacement of $z$. Then, for all real number $t$ we have $P_{S_{\mathcal{H}}}(z_{rep})=P_{S_{\mathcal{H}}}(z)$ and for all non-null $t$ sufficiently close to zero it holds that $R_{H_i} \cdots R_{H_2}R_{H_1}(z_{rep})\notin H_{i}$ for $i=1,\ldots,\check\imath$.
\end{lemma}

\begin{proof}
Without loss of generality, assume that the hyperplanes $H_1,H_2,\ldots,H_p$ are subspaces, as their intersection $S_\mathcal{H}$ is nonempty. The fact that reflections onto subspaces preserve the correspondent best approximation solution is a trivial consequence of Pythagoras  and the definition and affinity of the reflections. So, the projections onto $S_{\mathcal{H}}$ of all the points $R_{H_{i}} \cdots R_{H_2}R_{H_1}(z)$ with $i=1,2,\ldots,p$ is given by $P_{S_{\mathcal{H}}}(z)$. This holds in particular for $\check z$. By construction, $t\check a$ is orthogonal to $H_{\check \imath}$, hence we conclude using Pythagoras again that for all real number $t$ the projection of $\check z+t\check a$ onto $S_{\mathcal{H}}$ is also given by $P_{S_{\mathcal{H}}}(z)$. Now, it is easy to see that $z_{rep}$ is defined by reflections of $\check z+t\check a$ onto $H_i$'s starting backwards from the index $\check \imath$ until $1$. Indeed, remind that $z_{rep}\coloneqq z+tR_{H_1}R_{H_2}\cdots R_{H_{\check \imath-2}}R_{H_{\check \imath-1}}R_{H_{\check \imath}}(\check a)$, thus
\[R_{H_1}(z_{rep})=R_{H_1}(z+tR_{H_1}R_{H_2}\cdots R_{H_{\check \imath-2}}R_{H_{\check \imath-1}}R_{H_{\check \imath}}(\check a))\]
Using the linearity of the reflection $R_{H_1}$ and the fact that $R_{H_1}R_{H_1}=\Id$, we get
\[R_{H_1}(z_{rep})=R_{H_1}(z)+tR_{H_2}\cdots R_{H_{\check \imath-2}}R_{H_{\check \imath-1}}R_{H_{\check \imath}}(\check a).\]
Employing this argument successively for $R_{H_2}$ until $R_{H_{\check \imath}}$ implies that
\[R_{H_{\check \imath}}\cdots R_{H_2}R_{H_1}(z_{rep})=R_{H_{\check \imath}}\cdots R_{H_2}R_{H_1}(z)+t\check a,\]
that is,
\[R_{H_{\check \imath}}\cdots R_{H_2}R_{H_1}(z_{rep})=\check z+t\check a.\]
It follows that the projections of $z_{rep}$ and $\check z+t\check a$ onto $S_{\mathcal{H}}$ must coincide. Hence, $P_{S_{\mathcal{H}}}(z_{rep})=P_{S_{\mathcal{H}}}(z)$.

For all non-null $t$ we have $R_{H_{\check \imath}}\cdots R_{H_2}R_{H_1}(z_{rep})=\check z+t\check a\notin H_{\check \imath}$ as $\check a$ is non-null and orthogonal to $H_{\check \imath}$. This gives the lemma if $\check  \imath =1$. So, assume from now on that $\check  \imath>1$.  It remains to show that $R_{H_i} \cdots R_{H_2}R_{H_1}(z_{rep})\notin H_{i}$ for $i=1,\ldots,\check \imath-1$ if we take a non-null $t$ with sufficiently small modulus. That follows easily by hypothesis together with continuity of reflections and Euclidean distance to hyperplanes. By the definition of $\check \imath$ we have that $\dist(R_{H_1}(z),H_1)\neq 0$. Therefore, by continuity in $t$ of the function $$f_1(t)\coloneqq \dist(R_{H_1}(z_{rep}),H_1)=\dist(R_{H_1}(z+tR_{H_1}R_{H_2}\cdots R_{H_{\check \imath-2}}R_{H_{\check \imath-1}}R_{H_{\check \imath}}(\check a)),H_1)$$ we must have a whole interval $[-t_1,t_1]$, with $t_1>0$ for which $f_1(t)\neq 0$. Intervals $[-t_i,t_i]$ with $t_i>0$ like the previous one can be derived in the same way for the remaining indices $i=2,\ldots,\check \imath-1$ by considering the functions $$f_i(t)\coloneqq \dist(R_{H_i}\cdots R_{H_2}R_{H_1}(z_{rep}),H_i)=\dist(R_{H_i}\cdots R_{H_2}R_{H_1}(z+tR_{H_1}R_{H_2}\cdots R_{H_{\check \imath-2}}R_{H_{\check \imath-1}}R_{H_{\check \imath}}(\check a)),H_i).$$ Let $[-\check t,\check t]$ represent the smallest of these intervals. We then have that $R_{H_i} \cdots R_{H_2}R_{H_1}(z_{rep})\notin H_{i}$ for $i=1,\ldots,\check \imath$ if $z_{rep}$ is defined by means of a parameter $t$ belonging to $[-\check t,\check t]$.
\qed

\end{proof}

Note that the previous lemma does not necessarily lead us to a point $z_{rep}$  under the conditions of \Cref{conv-1-step}, we only have an improvement with respect to the index $\check \imath$. Nevertheless, if the $rep$ operation defined in  \Cref{lemma:procedure} is applied successively at most $p-\check \imath$ times, we get a new initial point say $z_{REP}$ so that $P_{S_{\mathcal{H}}}(z)=P_{S_{\mathcal{H}}}(z_{REP})$ and we have $R_{H_i} \cdots R_{H_2}R_{H_1}(z_{REP})\notin H_i$ for all $i=1,2,\ldots,p$. That is, $z_{REP}$ satisfies the conditions of Lemma \ref{conv-1-step} while keeping $P_{S_{\mathcal{H}}}(z)$ as the best approximation solution. This means that the full block Bw-CRM, which is the original CRM, is categorically always able to find the solution of the best approximation problem \eqref{BestProblem} in one single step for hyperplane intersection. Let us state this as a theorem.

\begin{theorem}[one step convergence of CRM]\label{CRM-1-step} Let $\mathcal{H}=(H_1,H_2,\ldots,H_p)$  where $H_i$'s are hyperplanes with nonempty intersection $S_\mathcal{H}$, $C_\mathcal{H}$ be the CRM operator regarding $\mathcal{H}$ and $z\in\RR^n$ be given. Then, CRM finds the projection of $z$ onto $S_\mathcal{H}$ in one single step (with eventual use of $z_{REP}$ as described above).
\end{theorem}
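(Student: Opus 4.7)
The plan is to combine the two preceding lemmas recursively, with Lemma \ref{conv-1-step} playing the role of the base case and Lemma \ref{procedure} providing the inductive step that cures ``bad luck'' one index at a time. First, I would check whether the given $z$ already satisfies the hypothesis of Lemma \ref{conv-1-step}, namely $R_{H_i}\cdots R_{H_1}(z)\notin H_i$ for every $i=1,\ldots,p$. If this is the case, the conclusion is immediate: a single application of $C_\mathcal{H}$ yields $C_\mathcal{H}(z)=P_{S_\mathcal{H}}(z)$, and nothing more needs to be done.

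Otherwise, let $\hat\imath_0$ be the smallest index at which $R_{H_{\hat\imath_0}}\cdots R_{H_1}(z)\in H_{\hat\imath_0}$. I would invoke Lemma \ref{procedure} with this $\hat\imath_0$, choosing a non-null $t$ in the interval $[-\hat t,\hat t]$ provided by the lemma, and produce $z^{(1)}\coloneqq z_{rep}$. The lemma guarantees two crucial properties simultaneously: $P_{S_\mathcal{H}}(z^{(1)})=P_{S_\mathcal{H}}(z)$, so the best-approximation target is preserved, and $R_{H_i}\cdots R_{H_1}(z^{(1)})\notin H_i$ for $i=1,\ldots,\hat\imath_0$, so the smallest ``bad'' index of $z^{(1)}$ (if any remains) is strictly greater than $\hat\imath_0$.

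Now I would iterate the construction. Either $z^{(1)}$ already satisfies the hypothesis of Lemma \ref{conv-1-step}, or it admits a smallest bad index $\hat\imath_1>\hat\imath_0$, to which Lemma \ref{procedure} is applied again to produce $z^{(2)}$ with strictly larger smallest bad index, and so on. Since the sequence of smallest bad indexes is strictly increasing inside the finite set $\{1,\ldots,p\}$, the procedure terminates after at most $p-\hat\imath_0$ replacements, yielding a final point $z_{REP}$ with $P_{S_\mathcal{H}}(z_{REP})=P_{S_\mathcal{H}}(z)$ and satisfying the hypothesis of Lemma \ref{conv-1-step}. One circumcenter step starting from $z_{REP}$ then delivers
\[
C_\mathcal{H}(z_{REP})=P_{S_\mathcal{H}}(z_{REP})=P_{S_\mathcal{H}}(z),
\]
which is exactly the claim.

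The only genuinely non-routine point is the monotone advance of the smallest bad index under the replacement operation, and this is already contained in Lemma \ref{procedure}: the choice of the parameter $t$ in the guaranteed interval $[-\hat t,\hat t]$ simultaneously cleans all indexes up to and including $\hat\imath$ while preserving the projection. Everything else is bookkeeping of a finite recursion, so no further technical machinery is required beyond the two preceding lemmas.
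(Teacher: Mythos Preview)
Your proposal is correct and follows essentially the same approach as the paper: the paper's argument (given in the paragraph immediately preceding the theorem) is exactly the finite recursion you describe, applying Lemma~\ref{procedure} at most $p-\hat\imath$ times to push the smallest bad index past $p$ while preserving $P_{S_\mathcal{H}}(z)$, and then invoking Lemma~\ref{conv-1-step}. Your write-up is simply a more explicit and carefully organized version of the same idea.
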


We remind that the probability of having to employ the $rep$ procedure is zero. This is due to the fact that the set of points $z\in\RR^n$ so that $R_{H_i} \cdots R_{H_1}(z)\notin H_i$ for all $i=1,2,\ldots,p$ is dense in $\RR^n$. The density holds because any $z\in\RR^n$ violating the aforementioned conditions can be approximated by a sequence of correspondent $z_{rep}$'s coming from sufficiently shrinking the size of $t\neq 0$ from  \Cref{lemma:procedure}. In any case, note that the $rep$ procedure is implementable. One only needs to consider a backtracking search on the parameter $t$, reflect onto hyperplanes (which can be done by closed formula) and check pertinence to these hyperplanes. 

To finalize the discussion in this section, we would like to present some further remarks.

We want to note that one can consider trivial examples showing that the conditions for one-step convergence in Lemma \ref{conv-1-step}, although sufficient, are not necessary. CRM will converge in one single step whenever the successive reflections generate an affine space of dimension $n-r$, where $r$ is the dimension of the intersection of the given subspaces. One could have the dimension $n-r$ even if the given subspaces are not hyperplanes and also under the \emph{bad luck} of getting reflected points precisely on them.

Our last remark is on possible finite convergence of CRM for hyperplane intersection without employing the $rep$ procedure at all. Although omitting the proof,  we notice  that CRM  converges in at most $3$ steps with no $rep$ procedure for the intersection of $2$ hyperplanes in $\RR^n$.  The challenging question for more than $2$ hyperplanes is left open. Also, we intend to investigate under which conditions one has finite convergence for  CRM, when the subspaces are not all hyperplanes.

\section{Numerical illustrations}
\label{sec:numerical-llustrations}
The geometric nature of Bw-CRM can be used as a tool for solving some classical problems, \emph{e.g.}, the least squares problem, the minimum-norm least-squares (rank deficient) problems, the least-norm solutions of undetermined system and under-determined large-scale linear systems, which are particular instances of problem \eqref{BestProblem}. In this section, we illustrate the  performance of Bw-CRM to solve two related problems: an application in computed tomography and the minimum-norm least square problem. We run all the numerical experiments in \texttt{Julia} language~\cite{Bezanson:2017g}.

\subsection{Application in Computed Tomography}

Reconstruction of images in Computed Tomography (CT) can be addressed by approximately solving linear systems of equations coming from the discretization of suitable inverse problems. Algebraic reconstruction techniques (ART), which are basically MAP type methods, are usually employed to solve those linear systems as not much accuracy is needed  for a solution representing a reasonable image for medical purposes~\cite[Chapter 11]{Herman:2009ej}.  

In this subsection, we solve a problem $As=b$, whose solution provides the  well known Shepp-Logan phantom head~\cite{Shepp:ia}. This  is a standard synthetic  image that serves as the model of a human head and is used for testing  image reconstruction algorithms.   The  data for the matrix $A$ and the vector $b$  were generated using \texttt{AIR Tools II}, a  package by Hansen and J{\o}rgensen~\cite{Hansen:2017ki}, and imported to be used in the \texttt{Julia} implementation. In this case, $A$ has \num{5732} rows and \num{2500} columns. The package also provides the exact $\num{50}\times\num{50}$ pixel Shepp-Logan image, which is represented  as  $ \hat z\in \RR^{2500}$.  

In our experiments, we use Bw-CRM and look at the quality of image reconstructions after a fixed budget of \num{10} iterations. The affine subspaces under consideration are the hyperplanes given by each row of $As=b$. These affine subspaces are distributed in blocks, where each block contains $q$ hyperplanes, except maybe for the last one which contains $(5732\bmod {q})$ hyperplanes.  We exhibit in Table \ref{tab:br-crm_CT} the residue and distance to the actual solution of each version of Bw-CRM, where {Bw-CRM-$q$} indicates that the block size used is ${q}$  --- or  $(5732\bmod {q})$ and the time in seconds of which method. Remind that {Bw-CRM-1} is MAP. It is worth noting that Bw-CRM-16, Bw-CRM-64 and Bw-CRM-256 all beat Bw-CRM-1 (MAP) both in iterations to achieve the same residue.

\begin{table}[H]
\centering
\caption{Bw-CRM applied to CT --  Matrix size: $5732\times 2500$ -- Budget of 10 iterations.} \label{tab:br-crm_CT}
\begin{tabular}
{
l
S[table-format = 1.4e+1]
S[table-format = 1.4e+1]
S[table-format = 1.4e+1]
}
\toprule
{\normalfont \bfseries Method-Block size}     &   {$\|Az_{10}-b\|$} &   {$\|z_{10}- \hat z\|$} &  \textbf{CPU} (s)  \\  
\midrule
Bw-CRM-1 (MAP)   & 3.0321e+01 & 1.3816e+00 & 5.3876 \\
Bw-CRM-16  & 2.8590e+01 & 1.3382e+00       & 8.5242 \\
Bw-CRM-64  & 4.2602e+00 & 1.0332e+00       & 6.1665 \\
Bw-CRM-256 & 7.1039e-01 & 2.7423e-01       & 8.7073 \\
\bottomrule
\end{tabular}
\end{table}
In Figure \ref{fig:CT-reconstructions} we display the original solution and each reconstruction by Bw-CRM for ${q} = 1, 16, 64, 256$. The best solution is achieved by {Bw-CRM-256} at the price of solving $\num{22}$ symmetric positive definite linear systems of size \num{256} and \num{1} of size \num{100}, as $5732 = 22\cdot 256 + 100$.
\begin{figure}[H]\centering
\begin{subfigure}[t]{.3\textwidth}
\centering
\includegraphics[width=\textwidth]{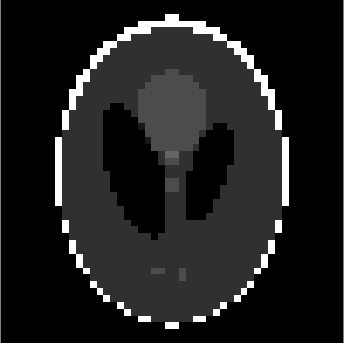}
   \caption{Exact Shepp-Logan}
  \label{fig:phantom-sol}
\end{subfigure} \qquad
\begin{subfigure}[t]{.3\textwidth}
\centering
\includegraphics[width=\textwidth]{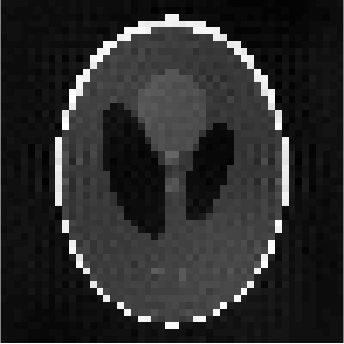}
   \caption{Bw-CRM-1 (MAP)}
  \label{fig:phantom-x1} 
\end{subfigure} \qquad
   \begin{subfigure}[t]{.3\textwidth}
\centering
\includegraphics[width=\textwidth]{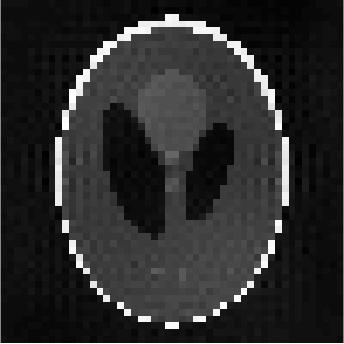}
   \caption{Bw-CRM-16}
  \label{fig:phantom-x16}
\end{subfigure} 
\\[10pt]
\begin{subfigure}[t]{.3\textwidth}
\centering
\includegraphics[width=\textwidth]{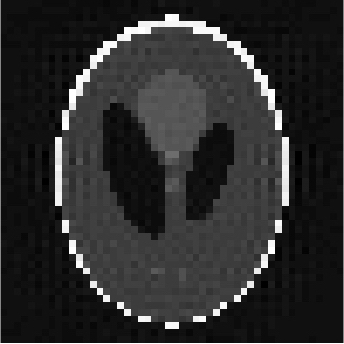}
   \caption{Bw-CRM-64}
  \label{fig:phantom-x64}  
\end{subfigure} \qquad
\begin{subfigure}[t]{.3\textwidth}
\centering
\includegraphics[width=\textwidth]{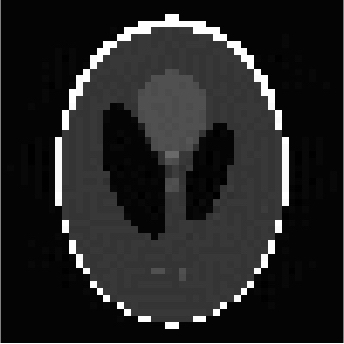}
   \caption{Bw-CRM-256}
  \label{fig:phantom-x256}  
\end{subfigure} 
\caption{CT image reconstructions of Shepp-Logan phantom of size $50\times50$.}
\label{fig:CT-reconstructions}
\end{figure}

\subsection{Solving a least norm problem}

A direct application of Bw-CRM is to solve the following optimization problem: 
Find  $\hat z\in \RR^n$, the solution of 
\begin{equation}
\label{lsp}
\min \|z-s\|, \text{ subject to } As=b,
\end{equation}
where $A\in\RR^{p\times n} (p\leq n)$, $b\in \RR^p$ and $z$ a given vector. The solution $\hat z$ is the closest point to $z$ that lies  in the intersection $S_\mathcal{H}$ of the hyperplanes in $\mathcal{H}\coloneqq ({H}_1, {H}_2, \ldots, {H}_p)$, where  ${H}_i$ is given by the solutions of the $i$-th equation of  $As=b$,   that is, $\hat z $ is the projection of $z$ onto  $S_\mathcal{H}$.

As shown in Section \ref{sec:onestepCRM-hyperplane},  Bw-CRM, when applied to solving this problem by taking the $p$ individual hyperplanes forming the equations (as the main block $\mathcal{H}$),   finds the solution $\hat z$ in just one iteration --- hatring some \emph{bad luck}, as already discussed. If we set $z = 0$, thus $C_{\mathcal{H}}(0)=\hat z$ and problem \eqref{lsp} becomes the minimum norm of under-determined system problem (MNP). It is well-known that if $A$ has full rank we can solve \eqref{lsp} by using the Moore-Penrose pseudo-inverse of $A$, as $\hat z= z +  A^T(AA^T)^{-1}(b - Az)$.

In order to illustrate various possible choices of blocks for Bw-CRM, we solve problem \eqref{lsp} using matrix coming from a finite element modeling, called \texttt{FIDAP005}, and available at Matrix Market~\cite{Boisvert:1997gt}. The matrix $A$ is given by selecting respectively the first 12, 24 and 27  rows of \texttt{FIDAP005}, $b$ is the correspondent vector of ones and we take $z= 0$.  The structure of the entire sparse matrix \texttt{FIDAP005} is shown in Figure~\ref{fig:FIDAP005}.
\begin{figure}[!ht]
\centering
\includegraphics[width=.45\textwidth]{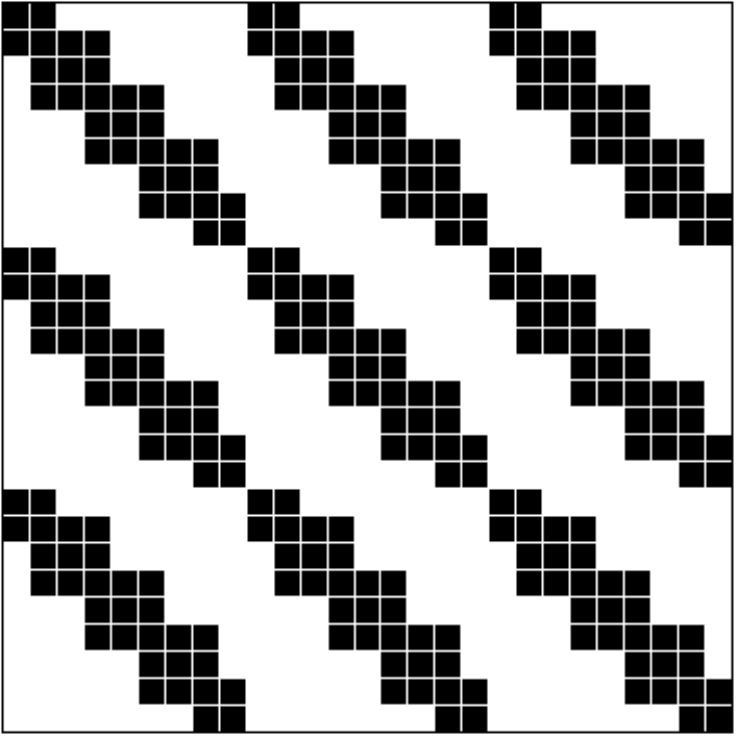}
\caption{Matrix \texttt{FIDAP005} sparsity structure.}
\label{fig:FIDAP005}
\end{figure}
Next, we show the results for Bw-CRM in  \Cref{tab:fidap005-12x27,tab:fidap005-24x27,tab:fidap005-27x27}, where each subspace under consideration is  given by a row equation of $As=b$. The different size of block choices are displayed in the first column of the tables, followed by the number of blocks, the number of projections/reflections, the number of iterations, the norm of the residue and the CPU time, in seconds. The stopping criterion was having the norm of the residue smaller than the labeled tolerance \texttt{tol}. Note that \Cref{tab:fidap005-27x27} presents the results where the sparse block structure of 
matrix \texttt{FIDAP005} is explored.

\begin{table}[htpb]
\centering
\caption{Results for Bw-CRM -- Matrix size: $12\times27$ -- $\tol = \num{e-5}$.} \label{tab:fidap005-12x27}
\begin{tabular}
{
l
S[table-format = 2]
S[table-format = 3]
S[table-format = 2]
S[table-format = 1.4e+2]
S[table-format = 1.4e+1]
}
\toprule
{\normalfont \bfseries Method-Block size}    &    \textbf{Blocks} &    \textbf{Proj/Reflec} &  \textbf{Iter} &   {$\|Az_{\text{Iter}}-b\|$} &   \textbf{CPU} (s) \\ 
\midrule
Bw-CRM-1 (MAP)	& 12 & 180 & 15 & 4.2323e-06 & 6.2563e-04 \\
Bw-CRM-2  		& 6  & 156 & 13 & 7.1863e-06 & 8.7344e-04 \\
Bw-CRM-3  		& 4  & 180 & 15 & 5.2967e-06 & 9.3172e-04 \\
Bw-CRM-4  		& 3  & 120 & 10 & 8.5466e-06 & 5.7481e-04 \\
Bw-CRM-6  		& 2  & 132 & 11 & 3.4566e-06 & 6.1370e-04 \\
Bw-CRM-12 (CRM)	& 1  & 12  & 1  & 7.8280e-14 & 9.9924e-05 \\
\bottomrule
\end{tabular}
\end{table}


\begin{table}[htpb]
\centering
\caption{Results for Bw-CRM -- Matrix size: $24\times27$ --  $\tol = \num{e-5}$. } \label{tab:fidap005-24x27}
\begin{tabular}
{
l
S[table-format = 2]
S[table-format = 5]
S[table-format = 3]
S[table-format = 1.4e2]
S[table-format = 1.4e+1]
}
\toprule
{\normalfont \bfseries Method-Block size}    &    \textbf{Blocks} &    \textbf{Proj/Reflec} &  \textbf{Iter} &   {$\|Az_{\text{Iter}}-b\|$} &   \textbf{CPU} (s) \\ 
\midrule
Bw-CRM-1 (MAP)	& 24 & 12048 & 502 & 9.8869e-06 & 6.9701e-02 \\
Bw-CRM-2  		& 12 & 11904 & 496 & 9.8041e-06 & 1.2950e-01 \\
Bw-CRM-3  		& 8  & 11160 & 465 & 9.9053e-06 & 9.9697e-02 \\
Bw-CRM-4  		& 6  & 9576  & 399 & 9.9067e-06 & 6.5870e-02 \\
Bw-CRM-6  		& 4  & 11880 & 495 & 9.8665e-06 & 8.9289e-02 \\
Bw-CRM-8  		& 3  & 10440 & 435 & 9.7581e-06 & 7.0528e-02 \\
Bw-CRM-12 		& 2  & 8328  & 347 & 9.7967e-06 & 8.1303e-02 \\
Bw-CRM-24 (CRM)	& 1  & 24    & 1   & 1.4852e-12 & 1.4166e-04 \\
\bottomrule
\end{tabular}
\end{table}

\begin{table}[htpb]
\centering
\caption{Results for Bw-CRM -- Matrix size: $27\times27$ --  $\tol = \num{e-3}$. \label{tab:fidap005-27x27}}
\begin{tabular}
{
l
S[table-format = 1]
S[table-format = 6]
S[table-format = 5]
S[table-format = 1.4e+2]
S[table-format = 1.4e+1]
}
\toprule
{\normalfont \bfseries Method-Block size}    &    \textbf{Blocks} &    \textbf{Proj/Reflec} &  \textbf{Iter} &   {$\|Az_{\text{Iter}}-b\|$} &   \textbf{CPU} (s) \\ 
\midrule
BW-CRM-1 (MAP)  & 27 & 3992166 & 147858 & 9.9998e-04 & 3.0667e+01 \\
BW-CRM-3  					  & 9  & 3448602 & 127726 & 9.9997e-04 & 2.6326e+01 \\
BW-CRM-9  					  & 3  & 3209355 & 118865 & 9.9999e-04 & 2.9115e+01 \\
BW-CRM-27 (CRM) & 1  & 27      & 1      & 6.9229e-10 & 1.5987e-04 \\
\bottomrule
\end{tabular}
\end{table}

As expected by the results of Section \ref{sec:onestepCRM-hyperplane}, the full block Bw-CRM converges in one iteration for the hyperplane intersection problems above. Note that we have to be careful when looking at the CPU time as it depends on the inner linear system solver for finding circumcenters. What we can say, though, is that the number of iterations tends to slightly increase as the number of blocks increase. It would be interesting to investigate whether there exists a sort of optimal block size, with respect to particular instances.

In contrast to the feasible set of the problems regarding \Cref{tab:fidap005-12x27,tab:fidap005-24x27}, the feasible set of the problem addressed in  \Cref{tab:fidap005-27x27} reduces to a singleton. Even though it is known that MAP suffers from zig-zag behavior,  we got surprised with the huge amount of iterations that it took to converge in the case of \Cref{tab:fidap005-27x27}. We have established  connections between Bw-CRM and MAP in \Cref{sec:bwCRM-MAP} and unfortunately it seems that, in the case, when MAP performs poorly this is inherited by Bw-CRM, except for the full block Bw-CRM. This is a motivation for future investigation on randomized order of subspaces or blocks for  Bw-CRM, as randomized versions of MAP performs a lot better \cite{Strohmer:2008cm}.

\section{Concluding remarks}\label{sec:conlcuding}

We presented new notions and results regarding circumcenter schemes for projecting a given point onto the (nonempty) intersection of a finite number of affine subspaces. Circumcenter iterations were introduced in \cite{Behling:2017da} and shown to provide a better bond between reflections than the one considered in the classical Douglas-Rachford approach. The results in \cite{Behling:2017bz} improved \cite{Behling:2017da} by enabling the Circumcentered-Reflection Method (CRM) to deal with $m>2$ affine subspaces. In the present article we also dealt with more than two sets. We defined the Block-wise Circumcentered-Reflection Method (Bw-CRM), which considers the $m$ affine subspaces in blocks. More precisely, we composed circumcenter operators along a partition of the indices $1,2,\ldots,m$. In this way, the original circumcenter method from \cite{Behling:2017bz} can be seen as Bw-CRM with one full block, where this block contains all $m$ affine subspaces. It was interesting that by considering Bw-CRM with $m$ blocks, \emph{i.e.}, the case where each block contains exactly one affine subspace, we recovered the famous method of alternating projections (MAP). Linear convergence for any blocks choice of Bw-CRM was proven. Our proof was carried out in a unified fashion thanks to the introduction of a new concept, the one of \emph{best approximation mapping}. In addition to deriving theoretical linear convergence of Bw-CRM, numerical experiments were run. For the numerical tests we considered blocks with homogeneous cardinality in order to investigate the relation between speed of convergence (time/complexity) and number of blocks in Bw-CRM. The experiments also indicated what became a curious result in this paper: it turns out that CRM (Bw-CRM with one full block) finds the projection of any given point onto the intersection of hyperplanes in one single step. 

This work contributed not only with a deeper understanding of circumcenter type methods, we think that our results represent another step towards using circumcenters in other settings. Our future research will be focused on enforcing circumcenter iterations for solving the nonconvex problem: Find $z\in S$ with 
\[S=U\cap V,\] where $ U=\bigcup_{i=1}^{m}U_i$, with $U_i$, for each $i$, being a subspaces and $V$ being an affine subspace. This problem contains as a particular case the nonconvex sparse affine feasibility problem for which DRM and MAP fail to converge globally. We have strong convictions based on initial numerical tests and some preliminary proofs that a (block-wise) circumcenter method can perform very well (global convergence) for this kind of affine-structured problem.

\begin{acknowledgements}
We dedicate this paper in honor of the 70th birthday of Professor J. M. Mart\'inez and of the 60th birthday of Professor Yuan Jinyun. The first author wants to thank the Federal University of Santa Catarina and remarks that part of his contribution to the present work was  carried out at this institution. We thank the anonymous referees for their valuable suggestions which  significantly improved the presentation of this manuscript.
\end{acknowledgements}

\bibliographystyle{spmpsci}

\bibliography{refs}

\begin{thebibliography}{10}
\providecommand{\url}[1]{{#1}}
\providecommand{\urlprefix}{URL }
\expandafter\ifx\csname urlstyle\endcsname\relax
  \providecommand{\doi}[1]{DOI~\discretionary{}{}{}#1}\else
  \providecommand{\doi}{DOI~\discretionary{}{}{}\begingroup
  \urlstyle{rm}\Url}\fi

\bibitem{Artacho:2014uo}
Arag{\'o}n~Artacho, F.J., Borwein, J.M., Tam, M.K.: {Recent Results on
  Douglas{\textendash}Rachford Methods for Combinatorial Optimization
  Problems}.
\newblock J. Optim. Theory Appl. \textbf{163}(1), 1--30 (2013)

\bibitem{AragonArtacho:2019ug}
Arag{\'o}n~Artacho, F.J., Campoy, R., Tam, M.K.: {The Douglas-Rachford
  Algorithm for Convex and Nonconvex Feasibility Problems}.
\newblock arXiv (1904.09148) (2019)

\bibitem{BCNPW14}
Bauschke, H.H., Bello-Cruz, J.Y., Nghia, T.T.A., Phan, H.M., Wang, X.: {The
  rate of linear convergence of the Douglas{\textendash}Rachford algorithm for
  subspaces is the cosine of the Friedrichs angle}.
\newblock J. Approx. Theory \textbf{185}, 63--79 (2014)

\bibitem{BCNPW15}
Bauschke, H.H., Bello-Cruz, J.Y., Nghia, T.T.A., Phan, H.M., Wang, X.: {Optimal
  Rates of Linear Convergence of Relaxed Alternating Projections and
  Generalized Douglas-Rachford Methods for Two Subspaces}.
\newblock Numer. Algorithms \textbf{73}(1), 33--76 (2016)

\bibitem{Bauschke:1993dd}
Bauschke, H.H., Borwein, J.M.: {On the convergence of von Neumann's alternating
  projection algorithm for two sets}.
\newblock Set-Valued Anal. \textbf{1}(2), 185--212 (1993)

\bibitem{Bauschke:2006ej}
Bauschke, H.H., Borwein, J.M.: {On Projection Algorithms for Solving Convex
  Feasibility Problems}.
\newblock SIAM Rev. \textbf{38}(3), 367--426 (2006)

\bibitem{Bauschke:2003gb}
Bauschke, H.H., Deutsch, F.R., Hundal, H., Park, S.H.: {Accelerating the
  Convergence of the Method of Alternating Projections}.
\newblock Trans. Amer. Math. Soc. \textbf{355}(9), 3433--3461 (2003)

\bibitem{Bauschke:2013jb}
Bauschke, H.H., Luke, D.R., Phan, H.M., Wang, X.: {Restricted Normal Cones and
  the Method of Alternating Projections: Theory}.
\newblock Set-Valued Var. Anal. \textbf{21}(3), 431--473 (2013)

\bibitem{Bauschke:2016jw}
Bauschke, H.H., Moursi, W.M.: {The Douglas--Rachford Algorithm for Two (Not
  Necessarily Intersecting) Affine Subspaces}.
\newblock SIAM J. Optim. \textbf{26}(2), 968--985 (2016)

\bibitem{Bauschke:2018ut}
Bauschke, H.H., Ouyang, H., Wang, X.: {On circumcenters of finite sets in
  Hilbert spaces}.
\newblock Linear Nonlinear Anal. \textbf{4}(2), 271--295 (2018)

\bibitem{Bauschke:2019uh}
Bauschke, H.H., Ouyang, H., Wang, X.: {Circumcentered methods induced by
  isometries}.
\newblock arXiv (1908.11576) (2019)

\bibitem{Bauschke:2018wa}
Bauschke, H.H., Ouyang, H., Wang, X.: {On circumcenter mappings induced by
  nonexpansive operators}.
\newblock Pure and Applied Functional Analysis  (in press)

\bibitem{Behling:2017da}
Behling, R., Bello-Cruz, J.Y., Santos, L.R.: {Circumcentering the
  Douglas{\textendash}Rachford method}.
\newblock Numer. Algorithms \textbf{78}(3), 759--776 (2018)

\bibitem{Behling:2017bz}
Behling, R., Bello-Cruz, J.Y., Santos, L.R.: {On the linear convergence of the
  circumcentered-reflection method}.
\newblock Oper. Res. Lett. \textbf{46}(2), 159--162 (2018)

\bibitem{Bezanson:2017g}
Bezanson, J., Edelman, A., Karpinski, S., Shah, V.B.: {Julia: A Fresh Approach
  to Numerical Computing}.
\newblock SIAM Rev. \textbf{59}(1), 65--98 (2017)

\bibitem{Boisvert:1997gt}
Boisvert, R.F., Pozo, R., Remington, K., Barrett, R.F., Dongarra, J.J.: {Matrix
  Market: a web resource for test matrix collections}.
\newblock In: R.F. Boisvert (ed.) Quality of Numerical Software, pp. 125--137.
  Springer, Boston, MA, Boston, MA (1997)

\bibitem{Borwein:2014ka}
Borwein, J.M., Tam, M.K.: {A Cyclic Douglas{\textendash}Rachford Iteration
  Scheme}.
\newblock J. Optim. Theory Appl. \textbf{160}(1), 1--29 (2014)

\bibitem{Borwein:2015vm}
Borwein, J.M., Tam, M.K.: {The cyclic Douglas-Rachford method for inconsistent
  feasibility problems}.
\newblock J. Nonlinear Convex Anal. \textbf{16}(4), 573--584 (2015)

\bibitem{Demanet:2016fj}
Demanet, L., Zhang, X.: {Eventual linear convergence of the Douglas-Rachford
  iteration for basis pursuit}.
\newblock Math. Comp. \textbf{85}(297), 209--238 (2016)

\bibitem{Deutsch:1995ja}
Deutsch, F.R.: {The Angle Between Subspaces of a Hilbert Space}.
\newblock In: S.P. Singh (ed.) Approximation Theory, Wavelets and Applications,
  pp. 107--130. Springer, Dordrecht, Dordrecht (1995)

\bibitem{Deutsch:2001fl}
Deutsch, F.R.: {Best Approximation in Inner Product Spaces}.
\newblock CMS Books in Mathematics. Springer, New York, NY (2001)

\bibitem{Hansen:2017ki}
Hansen, P.C., J{\o}rgensen, J.S.: {AIR Tools II: algebraic iterative
  reconstruction methods, improved implementation}.
\newblock Numer. Algorithms \textbf{79}(1), 107--137 (2017)

\bibitem{Herman:2009ej}
Herman, G.T.: {Fundamentals of computerized tomography}, 2 edn.
\newblock Advances in Pattern Recognition. Springer, Dordrecht (2009)

\bibitem{Hesse:2013cv}
Hesse, R., Luke, D.R.: {Nonconvex Notions of Regularity and Convergence of
  Fundamental Algorithms for Feasibility Problems}.
\newblock SIAM J. Optim. \textbf{23}(4), 2397--2419 (2013)

\bibitem{Hesse:2014gi}
Hesse, R., Luke, D.R., Neumann, P.: {Alternating Projections and
  Douglas-Rachford for Sparse Affine Feasibility}.
\newblock IEEE Trans. Signal Process. \textbf{62}(18), 4868--4881 (2014)

\bibitem{Lindstrom:2018uc}
Lindstrom, S.B., Sims, B.: {Survey: Sixty Years of Douglas--Rachford}.
\newblock arXiv (1809.07181) (2018)

\bibitem{Ouyang:2018gu}
Ouyang, H.: {Circumcenter operators in Hilbert spaces}.
\newblock Master's thesis, University of British Columbia, Okanagan (2018)

\bibitem{Shepp:ia}
Shepp, L.A., Logan, B.F.: {The Fourier reconstruction of a head section}.
\newblock IEEE Trans. Nucl. Sci. \textbf{21}(3), 21--43 (1974)

\bibitem{Strohmer:2008cm}
Strohmer, T., Vershynin, R.: {A Randomized Kaczmarz Algorithm with Exponential
  Convergence}.
\newblock J Fourier Anal Appl \textbf{15}(2), 262--278 (2008)

\end{thebibliography}

\end{document}